\newtheorem{thm}{Theorem}[section]
\newtheorem{lem}[thm]{Lemma}
\newtheorem{cor}[thm]{Corollary}
\newtheorem{prop}[thm]{Proposition}
\theoremstyle{definition}
\newtheorem{defn}[thm]{Definition}
\newtheorem{q}[thm]{Question}
\newtheorem{ex}[thm]{Example}
\newtheorem{hyp}[thm]{Hypothesis}
\newtheorem{rem}[thm]{Remark}
\numberwithin{equation}{section}
\newcommand{\secref}[1]{Section~\textup{\ref{#1}}}
\newcommand{\corref}[1]{Corollary~\textup{\ref{#1}}}
\newcommand{\lemref}[1]{Lemma~\textup{\ref{#1}}}
\newcommand{\propref}[1]{Proposition~\textup{\ref{#1}}}
\newcommand{\remref}[1]{Remark~\textup{\ref{#1}}}
\newcommand{\exref}[1]{Example~\textup{\ref{#1}}}
\newcommand{\C}{\mathbb C}
\newcommand{\T}{\mathbb T}
\newcommand{\Z}{\mathbb Z}
\newcommand{\OO}{\mathcal O}
\newcommand{\TT}{\mathcal T}
\newcommand{\HH}{\mathcal H}
\newcommand{\KK}{\mathcal K}
\newcommand{\LL}{\mathcal L}
\newcommand{\scr}{\mathcal}
\newcommand{\inv}{^{-1}}
\newcommand{\midtext}[1]{\quad\text{#1}\quad}
\newcommand{\righttext}[1]{\quad\text{#1 }}
\newcommand{\under}{\backslash}
\renewcommand{\:}{\colon}
\newcommand{\case}{& \text{if }}
\newcommand{\ifnot}{& \text{otherwise}}
\newcommand{\<}{\langle}
\renewcommand{\>}{\rangle}
\newcommand{\what}{\widehat}
\newcommand{\wilde}{\widetilde}
\renewcommand{\epsilon}{\varepsilon}
\renewcommand{\bar}{\overline}
\newcommand{\mtx}[1]{\begin{pmatrix} #1 \end{pmatrix}}
\newcommand{\smtx}[1]{\left(\begin{smallmatrix} #1\end{smallmatrix}\right)}
\newcommand{\ann}{^\perp}
\DeclareMathOperator{\ad}{Ad}
\DeclareMathOperator{\ind}{Ind}
\DeclareMathOperator*{\spn}{span}
\DeclareMathOperator*{\clspn}{\overline{\spn}}
\DeclareMathOperator{\dashind}{\!-Ind}
\DeclareMathOperator{\aut}{Aut}
\DeclareMathOperator{\ran}{ran}
\DeclareMathOperator{\supp}{supp}
\DeclareMathOperator{\coker}{coker}
\begin{document}
\title{Subgroup correspondences}
\author{S. Kaliszewski}
\address{School of Mathematical and Statistical Sciences, Arizona State University Tempe, Arizona, USA 85287}
\email{kaliszewski@asu.edu}
\author{Nadia S. Larsen}
\address{Department of Mathematics, University of Oslo,
PO BOX 1053 Blindern, N-0316 Oslo, Norway}
\email{nadiasl@math.uio.no}
\author{John Quigg}
\address{School of Mathematical and Statistical Sciences, Arizona State University Tempe, Arizona, USA 85287}
\email{quigg@asu.edu}
\date{December 14, 2016. Revised October 17, 2017.}

\subjclass[2010]{46L08, 22D25}
\keywords{$C^*$-correspondences, Cuntz-Pimsner algebras, group $C^*$-algebras}

\dedicatory{Dedicated to the memory of Ola Bratteli}

\begin{abstract}
For a closed subgroup of a locally compact group the Rieffel induction process gives rise to a $C^*$-correspondence
over the $C^*$-algebra of the subgroup.
We study the associated Cuntz-Pimsner algebra  and show that, by varying the subgroup to be
 open, compact, or discrete,  there are connections with the Exel-Pardo correspondence arising from a cocycle,
and also with graph algebras.
\end{abstract}
\maketitle

\section{Introduction}\label{intro}

Let $H$ be a closed subgroup of a locally compact group $G$.
Then the Rieffel induction process involves a $C^*(G)-C^*(H)$ correspondence $X$,
and restricting to $H$ (more precisely, composing the left $C^*(G)$-module structure on $X$ with the canonical homomorphism from $C^*(H)$ into $M(C^*(G))$) makes $X$ into a correspondence over $C^*(H)$.
We examine properties of the Cuntz-Pimsner algebra of this correspondence in terms of how $H$ sits as a subgroup of $G$.

The $C^*(H)$-correspondence $X$ has some special properties, e.g., it is nondegenerate and full.
Our results are sharpest when $X$ is \emph{regular}, i.e., $C^*(H)$ acts on the left faithfully by compact operators,
which seems to entail $H$ being open and of finite index in $G$.
In this case the representations of the Cuntz-Pimsner algebra $\OO_X$ correspond to representations $U$ of $H$ together with an explicit unitary equivalence between $U$ and $(\ind_H^G U)|_H$.
If $H$ is open and central in $G$, then the Cuntz-Pimsner algebra $\OO_X$ is the tensor product of $C^*(H)$ and a Cuntz algebra.

When $G$ is discrete, any choice of cross section of $G/H$ in $G$ gives rise to a cocycle for the action of $H$ on $G/H$ by translation, and $\OO_X$ is isomorphic to an associated Exel-Pardo algebra (for the action of $H$ on a directed graph with one vertex), generated by a Cuntz algebra and a representation of $H$ whose interaction with the Cuntz algebra involves the cocycle. Alternatively, this is described by a self-similar action of $H$ in the sense of Nekrashevych.
The cohomology class of the cocycle seems to be determined by the subgroup $H$ itself, explaining the independence of $\OO_X$ upon the choice of cross section.

When the subgroup $H$ is compact the Peter-Weyl theorem says that $C^*(H)$ is a $c_0$-direct sum of finite-dimensional algebras,
so is Morita equivalent to a commutative $C^*$-algebra with the same spectrum as $H$.
It follows that, by a theorem of Muhly and Solel, $\OO_X$ is Morita equivalent to the Cuntz-Pimsner algebra of a correspondence over this commutative algebra, and hence (by a result of Patani and the first and third authors) to the $C^*$-algebra of a directed graph $E$ that can be computed in terms of multiplicities of irreducible representations of $H$ induced across the correspondence $X$.
If $H$ is already abelian, then $\OO_X$ is isomorphic to this graph algebra $C^*(E)$.

In \secref{ex} we specialize further to a finite group $G$.
Then Mackey's Subgroup Theorem allows us to compute the multiplicities (and hence the directed graph $E$)
using the double $H$-cosets.
It turns out that interesting examples arise even when $H$ has order 2, and we examine this case in some detail.
$C^*(E)$, and hence $\OO_X$, is a UCT Kirchberg algebra that is classifiable by its
$K$-theory,
which only depends upon how large the centralizer of $H$ is in $G$,
more precisely upon two positive integers $r$ and $q$,
where the first is the index of $H$ in its centralizer and $r+2q$ is the index of $H$ in $G$.
When $r=1$ we have
$K_0=\Z_q\oplus \Z$ and $K_1=\Z$,
and it follows
(taking into account also the class of the identity in $K_0$)
that $\OO_X$ is isomorphic to
the $C^*$-algebra of the category of paths
given by the positive submonoid of a Baumslag-Solitar group,
studied by Spielberg.
When $q$ is also 1, $\OO_X$ is Morita equivalent to
two $C^*$-algebras studied by Laca and Spielberg,
involving a projective linear group acting on the boundary of the upper half plane or alternatively the Ruelle algebra of a 2-adic solenoid.
On the other hand, when $r>1$
the $K_1$ group is trivial,
and the $K_0$ group depends upon whether $r+q-1$ and $q$ are coprime.
If they are coprime, the $K_0$ group is finite cyclic, and hence
$\OO_X$ is a matrix algebra over a Cuntz algebra.
But if $r+q-1$ and $q$ are not coprime then $K_0$ is a direct sum of two finite cyclic groups,
and unfortunately we do not know any other famous Kirchberg algebras with this $K$-theory.

In the last section we briefly discuss a curious connection with Doplicher-Roberts algebras studied by Mann, Raeburn, and Sutherland.
The situation is decidedly different (in particular, not involving induced representations), but the results are uncannily similar.

We thank Jack Spielberg for numerous helpful conversations. This research was initiated during the visit of the second named author to Arizona State University and she thanks her two collaborators and Jack Spielberg for their hospitality.
Some of this research was done during a visit of the third author to the University of Oslo, and he thanks Erik B\'edos, Nadia Larsen, and Tron Omland for their hospitality. We are grateful to the anonymous referee for many useful suggestions that significantly improved the paper.

\section{Preliminaries}\label{prelim}

We record our notation and conventions for $C^*$-correspondences.
First of all, if $X$ is an $A$-correspondence, with left $A$-module structure given by the homomorphism $\varphi=\varphi_A\:A\to \LL(X)$, we will freely switch back and forth between the notations $ax$ and $\varphi(a)x$ for $a\in A,x\in X$.
We call the the correspondence $X$ \emph{faithful} if $\varphi$ is faithful, and \emph{nondegenerate} if $\varphi(A)X=X$.

A (Toeplitz) representation of $X$ in a $C^*$-algebra $B$ is a pair $(\psi,\pi)$, where
$\psi\:X\to B$ is a linear map and $\pi\:A\to B$ is a homomorphism such that
for $a\in A,x,y\in X$ we have
\begin{align*}
\psi(ax)&=\pi(a)\psi(x)
\\
\psi(x)^*\psi(y)&=\pi\bigl(\<x,y\>_{A}\bigr)
\end{align*}
(and hence $\psi(xa)=\psi(x)\pi(a)$).
If $\HH$ is a Hilbert space and $B$ is the algebra $B(\HH)$ of bounded operators on $\HH$, we say $(\psi,\pi)$ is a representation of $X$ on $\HH$.
A representation $(\psi,\pi)$ of $X$ on a Hilbert space $\HH$ is \emph{nondegenerate} if the $C^*$-algebra generated by $\psi(X)\cup \pi(A)$ acts nondegenerately on $\HH$.
If $X$ is nondegenerate as a correspondence then a representation $(\psi,\pi)$ of $X$ on $\HH$ is nondegenerate if and only if the representation $\pi$ of $A$ is nondegenerate.

The \emph{Toeplitz algebra} $\TT_X$ of $X$ is universal for Toeplitz representations.
$\KK(X)$ denotes the algebra of (generalized) compact operators on $X$, which is the closed linear span of the (generalized) rank-one operators $\theta_{x,y}$ given by $\theta_{x,y}z=x\<y,z\>_A$.
For any representation $(\psi,\pi)$ of $X$ in $B$, there is a unique homomorphism $\psi^{(1)}\:\KK(X)\to B$ such that $\psi^{(1)}(\theta_{x,y})=\psi(x)\psi(y)^*$ for all $x,y\in X$.

The \emph{Katsura ideal} of $A$ is $J_X:=\varphi\inv(\KK(X))\cap (\ker\varphi)\ann$,
where for any ideal $I$ of $A$ the \emph{orthogonal complement} of $I$ is the ideal $I\ann:=\{a\in A:ab=0\text{ for all }b\in I\}$.
A representation $(\psi,\pi)$ of $X$ in $B$ is \emph{Cuntz-Pimsner covariant} if $\pi(a)=\psi^{(1)}\circ\varphi(a)$ for all $a\in J_X$,
and the \emph{Cuntz-Pimsner algebra} $\OO_X$ is universal for Cuntz-Pimsner covariant representations, and is generated as a $C^*$-algebra by a universal Cuntz-Pimsner covariant representation $(k_X,k_A)$.
For any Cuntz-Pimsner covariant representation $(\psi,\pi)$ of $X$, we write $\psi\times\pi$ for the unique homomorphism of $\OO_X$ satisfying
\[
\psi=(\psi\times\pi)\circ k_X\midtext{and}\pi=(\psi\times\pi)\circ k_A.
\]
If $X$ is nondegenerate as an $A$-correspondence,
then the homomorphism $k_A\:A\to \OO_X$ is nondegenerate in the sense that $k_A(A)\OO_X=\OO_X$.

Our primary object of study will be the Cuntz-Pimsner algebra of a correspondence over the $C^*$-algebra of a subgroup $H$ of a locally compact group $G$.
Thus it is relevant to consider what sorts of representations of $H$ will occur as part of a Cuntz-Pimsner covariant representation.
The remainder of this section is devoted to some general remarks concerning representations of $C^*$-correspondences.
We claim no originality for these --- they are either readily available in the literature, or folklore. We refer to \cite[\S 2.4]{tfb} for background on induced representations.

\begin{lem}\label{ind res}
The
Toeplitz representations of
an
$A$-correspondence $X$
on a Hilbert space $\HH$
are in 1-1 correspondence with the
pairs $(\pi,V)$, where $\pi$ is a representation of $A$ on $\HH$
and $V\:X\otimes_A \HH\to \HH$ is an isometry implementing a unitary equivalence between
$X\dashind\pi$ and a subrepresentation of $\pi$.
\end{lem}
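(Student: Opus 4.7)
The plan is to establish the bijection by writing down the obvious forward and backward constructions and verifying that each does what it should.

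In the forward direction, given a Toeplitz representation $(\psi,\pi)$ on $\HH$, I would define a linear map on the algebraic tensor product by $x\otimes h\mapsto \psi(x)h$. To see that this descends to $X\otimes_A \HH$, I would use the consequence $\psi(xa)=\psi(x)\pi(a)$ of the Toeplitz relations noted parenthetically in the preliminaries, which gives $\psi(xa)h=\psi(x)\pi(a)h$, matching the balancing relation $xa\otimes h=x\otimes\pi(a)h$. To see that the resulting map $V$ is isometric, I would compute
\[
\<V(x\otimes h),V(y\otimes k)\>_\HH=\<h,\psi(x)^*\psi(y)k\>_\HH=\bigl\<h,\pi(\<x,y\>_A)k\bigr\>_\HH,
\]
which is precisely $\<x\otimes h,y\otimes k\>$ in $X\otimes_A\HH$. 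The intertwining property
\[
V\circ (X\dashind\pi)(a)(x\otimes h)=V(ax\otimes h)=\psi(ax)h=\pi(a)\psi(x)h=\pi(a)V(x\otimes h)
\]
is immediate from $\psi(ax)=\pi(a)\psi(x)$, so $V$ realizes the desired unitary equivalence between $X\dashind\pi$ and the subrepresentation of $\pi$ on $\ran V$.

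In the reverse direction, given $(\pi,V)$, I would define $\psi(x)\in B(\HH)$ by $\psi(x)h:=V(x\otimes h)$. Linearity of $\psi$ in $x$ is clear, and boundedness follows from $\|\psi(x)h\|=\|x\otimes h\|\le\|x\|\|h\|$ because $V$ is isometric. The identity $\psi(x)^*\psi(y)=\pi(\<x,y\>_A)$ follows by reading the inner-product computation above backwards (using that $V$ is isometric), and $\psi(ax)=\pi(a)\psi(x)$ follows by reading the intertwining calculation backwards.

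Finally, I would verify that these two constructions are mutually inverse: starting from $(\psi,\pi)$ and forming $V$, then recovering $\psi$ via $h\mapsto V(x\otimes h)$, returns the same $\psi$ by construction; starting from $(\pi,V)$ and forming $\psi$, then reconstructing the isometry, returns the same $V$ since both agree on elementary tensors and $X\otimes_A\HH$ is the closed linear span of such tensors. I do not expect any serious obstacle; the only point requiring a bit of care is the well-definedness of $V$ on the balanced tensor product, which is why the identity $\psi(xa)=\psi(x)\pi(a)$ is the right ``hinge'' to lean on.
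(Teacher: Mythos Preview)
Your proposal is correct and follows essentially the same approach as the paper: both define $V$ by $x\otimes\xi\mapsto\psi(x)\xi$ and recover $\psi$ from $V$ by the same formula, checking the Toeplitz relations via the inner-product and intertwining identities. The only differences are cosmetic---the paper cites \cite{FowRae} for the isometry rather than verifying directly, and inserts some additional remarks about the representation $\rho$ of $\LL(X)$ and Cuntz-Pimsner covariance on $\HH_\psi$ that are used later but are not needed for the lemma itself.
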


\begin{proof}
Let $(\psi,\pi)$ be a representation of $X$ on $\HH$.
The Rieffel induction process yields a representation
of
$\LL(X)$ on $B(X\otimes_A\HH)$,
and composing with the left-module homomorphism $\varphi\:A\to \LL(X)$ gives an induced representation $X\dashind\pi\:A\to B(X\otimes_A \HH)$.

Borrowing from \cite{FowRae},
we can define an isometry $V\:X\otimes_A \HH\to \HH$ by
\[
V(x\otimes_A \xi)=\psi(x)\xi\righttext{for}x\in X,\xi\in \HH.
\]
Conjugating by $V$,
\cite[Proposition~1.6]{FowRae} gives a unique representation
$\rho\:\LL(X)\to B(\HH)$
with essential subspace
\[
\HH_\psi:=\clspn\{\psi(X)\HH\}=\ran V
\]
such that
\[
\rho(T)\psi(x)\xi=\psi(Tx)\xi\righttext{for}T\in \LL(X),x\in X,\xi\in \HH,
\]
and moreover $\rho(\theta_{x,y})=\psi(x)\psi(y)^*$.

A quick computation shows that
the diagram
\[
\xymatrix@C+30pt{
X\otimes_A\HH \ar[r]^-{X\dashind\pi(a)} \ar[d]_V
&X\otimes_A\HH \ar[d]^V
\\
\HH \ar[r]_-{\pi(a)}
&\HH
}
\]
commutes. This also shows that $(\psi,\pi)$ is Cuntz-Pimsner covariant on the invariant subspace $\HH_\psi$,
because if $\varphi(a)\in \KK(X)$ then for all $x\in X$ we have
\[
\psi^{(1)}(\varphi(a))\psi(x)=\rho(\varphi(a))\psi(x)=\psi(ax)=\pi(a)\psi(x).
\]

Thus
$V$ implements a unitary equivalence between $X\dashind\pi$ and a subrepresentation of $\pi$,
namely the restriction of $\pi$ to $\HH_\psi$.

For the converse,
suppose we have a representation $\pi\:A\to B(\HH)$
and an isometry $V\:X\otimes_A \HH\to \HH$,
with range $L$,
such that
\[
\ad V\circ X\dashind\pi(a)\xi=\pi(a)\xi\righttext{for all}a\in A,\xi\in L.
\]
We must show that there exists a linear map $\psi:X\to B(\HH)$ such that
 $(\psi,\pi)$ is a Toeplitz representation and $V(x\otimes_A\xi)=\psi(x)\xi$ for $x\in X,\xi\in\HH$.
For $x\in X$, define $\psi(x)\:\HH\to\HH$ by
\[
\psi(x)\xi=V(x\otimes_A\xi).
\]
Using that $V$ is isometric, it follows that $(\psi,\pi)$ is a Toeplitz representation of $X$. We omit the details.

Finally, it is obvious from the constructions that if we now
start with this newly manufactured $\psi$
then the intertwining isometry defined as in the first part of the proof agrees with $V$.
\end{proof}

Before considering the Cuntz-Pimsner covariant representations,
we specialize the correspondence:

\begin{defn}
We call an $A$-correspondence $X$ \emph{regular} if $J_X=A$,
i.e., $A$ acts faithfully by compact operators on $X$.
\end{defn}

\begin{rem}
If $X$ is nondegenerate and regular, then $k_X^{(1)}\:\KK(X)\to \OO_X$ is nondegenerate,
because $k_A$ is.
\end{rem}

Recall that for a representation $(\psi,\pi)$ of $X$ on a Hilbert space $\HH$ we write $\HH_\psi=\clspn\{\psi(X)\HH\}$.

After we had completed this paper, we learned that the following lemma is essentially the same as \cite[Proposition 2.5]{AlbMey}.

\begin{lem}
A nondegenerate representation $(\psi,\pi)$ of a nondegenerate regular $A$-correspondence $X$
on a Hilbert space $\HH$
is
Cuntz-Pimsner covariant if and only if $\HH_\psi=\HH$.
\end{lem}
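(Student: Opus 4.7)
The plan is to exploit the structural fact already visible in the proof of \lemref{ind res}: the representation $(\psi,\pi)$ is automatically Cuntz-Pimsner covariant on the invariant subspace $\HH_\psi$, so the only content of covariance is whether $\HH_\psi$ fills all of $\HH$. Since $X$ is regular we have $J_X=A$, so Cuntz-Pimsner covariance requires $\pi(a)=\psi^{(1)}\circ\varphi(a)$ for \emph{every} $a\in A$.

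The first step is to record two elementary observations about where $\psi^{(1)}(\KK(X))$ lives. Let $P$ denote the orthogonal projection onto $\HH_\psi$. For every $x,y\in X$ we have $\psi^{(1)}(\theta_{x,y})=\psi(x)\psi(y)^*$, whose range plainly sits inside $\HH_\psi$; moreover, for $\xi\perp\HH_\psi$ and any $\eta\in\HH$, $\<\eta,\psi(y)^*\xi\>=\<\psi(y)\eta,\xi\>=0$, so $\psi(y)^*\xi=0$ and hence $\psi^{(1)}(\theta_{x,y})\xi=0$. By density and continuity, $\psi^{(1)}(K)=P\psi^{(1)}(K)P$ for every $K\in\KK(X)$. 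In particular the range of $\psi^{(1)}\circ\varphi$ is contained in $P\LL(\HH)P$.

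For the reverse implication, suppose $\HH_\psi=\HH$. The proof of \lemref{ind res} already established that on $\HH_\psi$ we have $\psi^{(1)}(\varphi(a))\psi(x)\xi=\psi(ax)\xi=\pi(a)\psi(x)\xi$ for all $a\in A$, $x\in X$, $\xi\in\HH$. Since such vectors span a dense subspace of $\HH_\psi=\HH$, we obtain $\pi(a)=\psi^{(1)}(\varphi(a))$ on all of $\HH$, and since $J_X=A$ this is exactly Cuntz-Pimsner covariance.

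For the forward implication, assume $(\psi,\pi)$ is Cuntz-Pimsner covariant. Then $\pi(a)=\psi^{(1)}(\varphi(a))$ for every $a\in A$, so by the first step $\pi(a)\HH\subseteq\HH_\psi$. Because $X$ is nondegenerate and $(\psi,\pi)$ is a nondegenerate representation, the preliminaries yield that $\pi$ is nondegenerate as a representation of $A$, so $\HH=\clspn\{\pi(A)\HH\}\subseteq\HH_\psi$, giving $\HH_\psi=\HH$. The only subtle point is invoking nondegeneracy of $\pi$ at the end — this is precisely the reason the hypothesis ``$X$ nondegenerate'' appears, and without it one would only be able to conclude that $\HH_\psi$ contains the essential subspace of $\pi$.
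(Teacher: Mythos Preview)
Your proof is correct and follows essentially the same approach as the paper's: both directions hinge on the observation that $\psi^{(1)}(\KK(X))$ has range contained in $\HH_\psi$ (the paper phrases this as $\pi(A)\subset\clspn\{\psi(X)\psi(X)^*\}$, you as $\psi^{(1)}(K)=P\psi^{(1)}(K)P$), together with the computation $\psi^{(1)}(\varphi(a))\psi(x)=\pi(a)\psi(x)$ on $\HH_\psi$ and nondegeneracy of $\pi$. The only cosmetic difference is that you cite the proof of \lemref{ind res} for the covariance-on-$\HH_\psi$ computation, while the paper reproves it via the Fowler--Raeburn representation $\rho$.
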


\begin{proof}
First assume that
$\HH_\psi=\HH$.
By \cite[Proposition~1.6 (1)]{FowRae} there is a
unique representation $\rho\:\LL(X)\to B(\HH)$,
with essential subspace $\HH_\psi$,
such that
\[
\rho(S)\psi(x)\xi=\psi(Sx)\xi\righttext{for}S\in \LL(X),x\in X,\xi\in \HH,
\]
and moreover the restriction of $\rho$ to $\KK(X)$ coincides with the canonical representation $\psi^{(1)}$.
For $a\in J_X$,
we must show that $\pi(a)=\psi^{(1)}\circ \varphi(a)$.
Since $\HH_\psi=\HH$,
by density and continuity it suffices to note that
for all $x\in X$ and $\xi\in \HH$,
since $\varphi(a)\in \KK(X)$,
\begin{align*}
\psi^{(1)}\circ \varphi(a)\psi(x)\xi
&=\rho(\varphi(a))\psi(x)\xi
\\&=\psi(\varphi(a)x)\xi
\\&=\psi(ax)\xi
\\&=\pi(a)\psi(x)\xi.
\end{align*}

Conversely, assume that $(\psi,\pi)$ is Cuntz-Pimsner covariant.
Since
$\pi$ is nondegenerate,
it suffices to show that
\[
\pi(A)\subset \clspn\{\psi(X)\psi(X)^*\}.
\]
But since $J_X=A$, by Cuntz-Pimsner covariance we have
\begin{align*}
\pi(A)\subset \psi^{(1)}(\KK(X))
&=\clspn\{\psi^{(1)}(\theta_{x,y}):x,y\in X\}
\\&=\clspn\{\psi(X)\psi(X)^*\}.
\qedhere
\end{align*}
\end{proof}

\begin{cor}\label{ind res reg}
The Cuntz-Pimsner covariant representations of a nondegenerate regular $A$-correspondence $X$ on a Hilbert space $\HH$ are in 1-1 correspondence with the pairs $(\pi,V)$, where $\pi$ is a representation of $A$ on $\HH$
and $V\:X\otimes_A\HH\to\HH$ implements a unitary equivalence between $X\dashind\pi$ and $\pi$.
\end{cor}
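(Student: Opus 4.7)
The plan is to derive the corollary by simply combining Lemma~\ref{ind res} with the immediately preceding lemma, which characterizes Cuntz-Pimsner covariance in the nondegenerate regular case by the condition $\HH_\psi = \HH$. Lemma~\ref{ind res} already supplies a 1-1 correspondence between Toeplitz representations $(\psi,\pi)$ of $X$ on $\HH$ and pairs $(\pi,V)$, where $V\:X\otimes_A\HH\to\HH$ is an isometry intertwining $X\dashind\pi$ with the subrepresentation of $\pi$ on $\HH_\psi = \ran V$. Under this bijection I would show that Cuntz-Pimsner covariance matches exactly the condition that $V$ be surjective, and hence unitary; that is, that $V$ implements a unitary equivalence with all of $\pi$ rather than with a proper subrepresentation.

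For the forward implication, suppose $(\psi,\pi)$ comes from Lemma~\ref{ind res} and is Cuntz-Pimsner covariant. The preceding lemma then gives $\HH_\psi = \HH$, so $\ran V = \HH$, making the isometry $V$ a unitary, which by construction intertwines $X\dashind\pi$ and $\pi$. Conversely, starting from a pair $(\pi, V)$ with $V\:X\otimes_A\HH\to\HH$ unitary, Lemma~\ref{ind res} produces a Toeplitz representation $(\psi,\pi)$ with $\HH_\psi = \ran V = \HH$; the preceding lemma then yields Cuntz-Pimsner covariance.

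The only mildly delicate point is bookkeeping about nondegeneracy, since the preceding lemma is stated for nondegenerate representations while the corollary does not single this out. On both sides, however, nondegeneracy is automatic in the regular setting. If $V$ is unitary, then $\psi(X)\HH = V(X\otimes_A\HH)$ is dense in $\HH$, and using that $X$ is a nondegenerate correspondence one has $\pi(A)\psi(X)\HH = \psi(\varphi(A)X)\HH = \psi(X)\HH$, which is dense in $\HH$; hence $\pi$ is nondegenerate. Conversely, if $(\psi,\pi)$ is Cuntz-Pimsner covariant then $J_X = A$ gives $\pi(A)\subset \clspn\{\psi(X)\psi(X)^*\}$, so $\pi(A)\HH \subset \HH_\psi$, and after passing to the essential subspace of $\pi$ (which coincides with $\HH_\psi$) the preceding lemma applies and forces $\HH_\psi = \HH$ in the first place. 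Thus the bijection of Lemma~\ref{ind res} restricts cleanly to the one asserted in the corollary.
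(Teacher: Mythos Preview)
Your approach is exactly the paper's intended one: the corollary is stated without proof, as an immediate consequence of combining Lemma~\ref{ind res} with the preceding lemma, and your first two paragraphs carry this out correctly.

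The final sentence of your third paragraph, however, does not work as written. Restricting to the essential subspace $\HH_\psi$ and applying the preceding lemma there only yields the tautology $\HH_\psi = \HH_\psi$, not $\HH_\psi = \HH$. Indeed, a degenerate Cuntz-Pimsner covariant representation exists in this setting (take any nondegenerate one and extend by zero on an orthogonal summand), and for it $\HH_\psi \subsetneq \HH$, so the associated $V$ is not unitary. The corollary should therefore be read with nondegeneracy of $(\psi,\pi)$ implicit --- a point the paper itself glosses over. Your forward-direction argument already shows that any pair $(\pi,V)$ with $V$ unitary produces a nondegenerate representation, so the clean statement is that the bijection of Lemma~\ref{ind res} restricts to one between \emph{nondegenerate} Cuntz-Pimsner covariant representations and pairs $(\pi,V)$ with $V$ unitary.
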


\section{Subgroups}\label{subgroups}

Now let $H$ be a closed subgroup of a locally compact group $G$,
and let $X$ be the
$C^*(G)-C^*(H)$ correspondence for Rieffel induction, see for example \cite[\S 2.4 and Appendix C]{tfb}. We will assume henceforth that $G$ is second countable.
Composing the left $C^*(G)$-module structure with the canonical nondegenerate homomorphism
$C^*(H)\to M(C^*(G))$,
$X$ becomes a
$C^*$-correspondence over $A:=C^*(H)$

Note that the left-module homomorphism $\varphi=\varphi_A\:A\to \LL(X)$ is nondegenerate, and the $A$-correspondence $X$ is full.
It still seems to be unknown (at least to us) whether
the correspondence $X$ is always \emph{faithful} in the sense that
$\varphi_A$ is faithful,
equivalently whether the canonical homomorphism $C^*(H)\to M(C^*(G))$ is faithful
(see \cite[paragraph following Proposition~4.1]{rie:induced}.
It is faithful if the subgroup $H$ is either open \cite[Proposition~1.2]{rie:induced} or compact (this follows from \cite[Corollary~3 of Theorem~5.5]{fel:weak2}),
and also if $H$ is amenable,
since then $C^*(H)=C^*_r(H)$ and the composition
\[
C^*_r(H)\to M(C^*(G))\to M(C^*_r(G))
\]
is always faithful.
It seems to us that examples where $C^*(H)\to M(C^*(G))$ is not faithful, if they exist,
will be somewhat exotic.

\begin{hyp}
We will tacitly assume throughout that
the subgroup $H$ of $G$ is such that
the correspondence $X$ is faithful,
equivalently $C^*(H)\to M(C^*(G))$ is faithful.
\end{hyp}

\begin{q}
When will $\varphi_A$ map $C^*(H)$ into the algebra $\KK(X)$ of compact operators?
\end{q}

Note that the imprimitivity theorem, cf. e.g. \cite[Theorem C.23]{tfb} says
\[
\KK(X)=C_0(G/H)\rtimes G.
\]
If $H$ is open then
the natural inclusion $C_c(H)\hookrightarrow C_c(G)$
extends to a faithful embedding
$C^*(H)\subset C^*(G)$ \cite[Proposition~1.2]{rie:induced}.
If $H$ is cocompact in $G$, i.e., $G/H$ is compact, then $C_0(G/H)=C(G/H)$ is unital, so
$i_G(C^*(G))\subset C(G/H)\rtimes G$.
So, if $H$ is open and cocompact then $\varphi(A)\subset \KK(X)$.

On the other hand, if $H$ is not cocompact, then $C_0(G/H)$ is not unital,
and it follows from \lemref{nonunital} below
below that $\varphi(A)\cap \KK(X)=\{0\}$.
If $H$ is cocompact but not open,
the situation is not clear to us in general,
and we will not seriously study this case.

In the preceding paragraph we appealed to the following lemma, which must be folklore:

\begin{lem}\label{nonunital}
If $\alpha$ is an action of a locally compact group $G$ on a nonunital $C^*$-algebra $A$, then
\[
i_G(M(C^*(G)))\cap (A\rtimes_\alpha G)=\{0\}.
\]
\end{lem}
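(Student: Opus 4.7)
The plan is to combine the unitization of $A$ with the exactness of the full crossed product functor.

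First I would extend $\alpha$ to an action $\tilde\alpha$ of $G$ on the unitization $\tilde A=A+\C 1$ by having $\tilde\alpha$ fix the adjoined unit. The resulting $G$-equivariant short exact sequence $0\to A\to \tilde A\to \C\to 0$ (with trivial action on $\C$) will yield, by exactness of the full crossed product, a short exact sequence
\[
0\longrightarrow A\rtimes_\alpha G\longrightarrow \tilde A\rtimes_{\tilde\alpha} G\xrightarrow{q} C^*(G)\longrightarrow 0.
\]

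Because $\tilde A$ is unital, the canonical homomorphism $i_G^{\tilde A}\:C^*(G)\to M(\tilde A\rtimes_{\tilde\alpha} G)$ actually takes values in $\tilde A\rtimes_{\tilde\alpha} G$, and the universal property of $\C\rtimes G=C^*(G)$ immediately identifies $q\circ i_G^{\tilde A}$ with $\id_{C^*(G)}$; in other words, $i_G^{\tilde A}$ is a section of $q$. Extending $q$ strictly continuously to $\bar q\:M(\tilde A\rtimes_{\tilde\alpha} G)\to M(C^*(G))$ (permissible because $q$ is surjective, hence nondegenerate) and passing to limits, I will get $\bar q\circ i_G^{\tilde A}=\id_{M(C^*(G))}$.

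Next I want to identify $i_G=i_G^A$ with $i_G^{\tilde A}$. Since $A$ is nonunital, $A$ is essential in $\tilde A$, and a standard approximate-identity argument will show that $A\rtimes_\alpha G$ is essential in $\tilde A\rtimes_{\tilde\alpha} G$; consequently the restriction map $M(\tilde A\rtimes_{\tilde\alpha} G)\hookrightarrow M(A\rtimes_\alpha G)$ is injective, and a check on group elements (extended by strict continuity) shows that it carries $i_G^{\tilde A}(m)$ to $i_G(m)$ for every $m\in M(C^*(G))$. Under this identification the hypothesis $i_G(m)\in A\rtimes_\alpha G$ becomes $i_G^{\tilde A}(m)\in A\rtimes_\alpha G=\ker q$, so
\[
m=\bar q\bigl(i_G^{\tilde A}(m)\bigr)=q\bigl(i_G^{\tilde A}(m)\bigr)=0,
\]
whence $i_G(m)=0$ as required.

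The main obstacle will be the multiplier bookkeeping: verifying that $i_G^A$ and $i_G^{\tilde A}$ really do agree under the essential-ideal embedding, and that $\bar q$ restricts to $q$ on the image of $i_G^{\tilde A}$ so that the displayed computation is legitimate. Once those compatibilities are in place, the substantive content of the proof is just that an element of a split short exact sequence lying both in the image of the section and in the kernel must vanish.
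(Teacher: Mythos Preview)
Your proposal is correct and follows essentially the same route as the paper: both arguments pass to the unitization $\tilde A$, use the resulting split exact sequence $0\to A\rtimes G\to \tilde A\rtimes G\to C^*(G)\to 0$ with section $i_G^{\tilde A}$, and then observe that an element in both the image of the section and the kernel must vanish. The only cosmetic difference is that the paper first reduces from $M(C^*(G))$ to $C^*(G)$ by a one-line multiplier argument and then works entirely inside $\tilde A\rtimes G$, whereas you handle $m\in M(C^*(G))$ directly via the strict extension $\bar q$ and the essential-ideal embedding $M(\tilde A\rtimes G)\hookrightarrow M(A\rtimes G)$; the bookkeeping you flag (that $i_G^A$ and $i_G^{\tilde A}$ agree under this embedding, and that $r(T)\in A\rtimes G$ forces $T\in A\rtimes G$ by injectivity of $r$) is routine and your argument goes through.
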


\begin{proof}
First note that it suffices to show that $i_G(C^*(G))\cap (A\rtimes G)=\{0\}$,
because then if we had any nonzero $m\in M(C^*(G))$ for which $i_G(m)\in A\rtimes G$,
then we could choose $c\in C^*(G)$ such that $mc\ne 0$,
and then $i_G(mc)$ would be a nonzero element of $i_G(C^*(G))\cap (A\rtimes G)$.

The action extends continuously to the unitization $\wilde A$,
and we have a split short exact sequence
\[
\xymatrix{
0\ar[r]
&A \ar[r]^\iota
&\wilde A \ar@/_/[r]_\rho
&\C \ar@/_/[l]_\sigma \ar[r]
&0
}
\]
that is $G$-equivariant.
Taking crossed products, we get a split short exact sequence
\[
\xymatrix{
0\ar[r]
&A\rtimes G \ar[r]^{\iota\rtimes G}
&\wilde A\rtimes G \ar@/_/[r]_{\rho\rtimes G}
&C^*(G) \ar@/_/_-{i_G^{\wilde A}}[l] \ar[r]
&0,
}
\]
where $i_G^{\wilde A}\:C^*(G)\to \wilde A\rtimes G$ is the canonical homomorphism,
which coincides with
$\sigma\rtimes G$.

The canonical covariant pair $(i_A,i_G^A)\:(A,G)\to M(A\rtimes_\alpha G)$
is compatible with the pair $(i_{\wilde A},i_G^{\wilde A})$ in the following sense:
first of all, the nondegenerate homomorphism
$i_A\:A\to M(A\rtimes G)$
extends canonically to
$\bar i_A\:\wilde A\to M(A\rtimes G)$,
the pair
$(\bar i_A,i_G^A)\:(\wilde A,G)\to M(A\rtimes G)$
is covariant
and
the diagram
\[
\xymatrix@C+20pt{
A\rtimes G \ar[r]^-{\iota\rtimes G} \ar@{^(->}[d]
&\wilde A\rtimes G \ar[dl]_{\bar i_A\times i_G^A}
\\
M(A\rtimes G)
&C^*(G) \ar[l]^-{i_G^A} \ar[u]_{i_G^{\wilde A}}
}
\]
commutes.
Combining diagrams, if we had a nonzero $c\in C^*(G)$ such that $i_G^A(c)\in A\rtimes G$,
then $i_G^{\wilde A}(c)$ would be a nonzero element of $\wilde A\rtimes G$
that lies in the ideal $A\rtimes G$,
which would give
\[
0\ne c=(\rho\rtimes G)\circ i_G^{\wilde A}(c)=(\rho\rtimes G)\circ (\iota\rtimes G)\circ i_G^A(c)=0,
\]
a contradiction.
\end{proof}

\begin{cor}
When $H$ is open,
we have a dichotomy:
$G/H$ is
either finite, in which case $J_X=A$,
or infinite, in which case $J_X=\{0\}$
and $\OO_X=\TT_X$.
\end{cor}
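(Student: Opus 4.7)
The plan is to split along the dichotomy already built into the statement, observing that openness of $H$ forces $G/H$ to carry the discrete quotient topology, so the alternatives ``finite'' and ``infinite'' correspond precisely to $G/H$ being compact or not. Under the standing hypothesis that $\varphi=\varphi_A$ is faithful, I would first simplify the Katsura ideal to
\[
J_X=\varphi\inv(\KK(X)),
\]
since $\ker\varphi=\{0\}$ and hence $(\ker\varphi)\ann=A$.

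For the finite branch, the space $G/H$ is compact and discrete, so $C_0(G/H)=C(G/H)$ is unital. I would then simply invoke the observation already recorded in the paragraph preceding \lemref{nonunital}, which gives $\varphi(A)\subset\KK(X)$, and conclude $J_X=A$.

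For the infinite branch, $G/H$ is still discrete but no longer compact, so $C_0(G/H)$ is non-unital. Applying \lemref{nonunital} to the canonical action of $G$ on $C_0(G/H)$, together with the fact that $\varphi(A)$ lies in the image in $\LL(X)=M(\KK(X))=M(C_0(G/H)\rtimes G)$ of the canonical extension of $i_G$ to $M(C^*(G))$ (which is how the left $C^*(H)$-module structure on $X$ is manufactured via $C^*(H)\to M(C^*(G))$), yields $\varphi(A)\cap\KK(X)=\{0\}$. Combined with faithfulness of $\varphi$ this forces $\varphi\inv(\KK(X))=\ker\varphi=\{0\}$, so $J_X=\{0\}$. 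The Cuntz-Pimsner covariance condition then becomes vacuous, so every Toeplitz representation is automatically Cuntz-Pimsner covariant and $\OO_X=\TT_X$.

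There is no real obstacle here: the corollary is essentially a bookkeeping consequence of what has already been observed about $C_0(G/H)$ being unital versus non-unital, together with \lemref{nonunital}. The only small point that needs checking is that $\varphi(A)$ is genuinely contained in the image of $M(C^*(G))$ inside $\LL(X)$ so that \lemref{nonunital} can be brought to bear, and this is immediate from the very definition of the left $C^*(H)$-action on $X$ via the canonical nondegenerate homomorphism $C^*(H)\to M(C^*(G))$.
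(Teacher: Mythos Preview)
Your proposal is correct and follows exactly the route the paper intends: the corollary is stated without proof precisely because the two branches are already handled in the discussion preceding \lemref{nonunital} (the open-and-cocompact case giving $\varphi(A)\subset\KK(X)$, and the non-cocompact case giving $\varphi(A)\cap\KK(X)=\{0\}$ via \lemref{nonunital}), together with the standing faithfulness hypothesis. Your observation that openness of $H$ makes $G/H$ discrete, so that ``finite'' and ``cocompact'' coincide, is the only small bridge needed, and you supply it.
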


\begin{rem}
In any case, if $H$ is cocompact in $G$
and $(\psi,\pi)$ is a Toeplitz representation of the $A$-correspondence $X$ on a Hilbert space $\HH$,
then
for $a\in A,x\in X$ we have
\[
\pi(a)\psi(x)=\psi(ax)=\psi^{(1)}(\varphi(a))\psi(x),
\]
so the restriction of $(\psi,\pi)$ to the invariant subspace $\HH_\psi$ is Cuntz-Pimsner covariant.
\end{rem}

Here are the two extremes for how $H$ can sit inside $G$:
if $H=\{1\}$, then $X$ is the Hilbert space $L^2(G)$,
regarded as a $\C$-correspondence,
so $\OO_X$ is the Cuntz algebra $\OO_{L^2(G)}$.
Note that, due to our standing hypothesis that $G$ is second countable,
the Hilbert space $L^2(G)$ is separable,
and so $\OO_{L^2(G)}$ is either the Cuntz algebra $\OO_n$ if $G$ is finite of order $n$, or $\OO_\infty$ if $G$ is infinite.
At the other extreme,
if $H=G$, then $X$ is the \emph{identity} $C^*(G)$-correspondence $C^*(G)$, so $\OO_X=C(\T)\otimes C^*(G)$.

Here are a couple of obvious general properties of $X$ and $\OO_X$.
If $H$ is exact, then
so is $C^*(H)$, so
$\OO_X$ is exact
by \cite[Theorem~7.1]{katsuracorrespondence}.
Similarly,
if $H$ is amenable, or more generally if $C^*(H)$ is nuclear, then $\OO_X$ is nuclear,
by
\cite[Corollary~7.4]{katsuracorrespondence}.

\section{$H$ open}\label{open}

Suppose that $H$ is an open subgroup of $G$.
Then every double $H$-coset $HtH$ is open,
and $C_c(HtH)$ is closed under left and right multiplication by $C_c(H)$
(in the convolution algebra $C_c(G)$).
Note that $C_c(G)$ is the algebraic direct sum of the vector subspaces $C_c(HtH)$.
If $f\in C_c(HtH)$ and $g\in C_c(HsH)$
then
\begin{align*}
\<f,g\>_A(h)
&=(f^**g)(h)
\\&=\int_G f^*(r)g(r\inv h)\,dr
\\&=\int_G \bar{f(r\inv)}\Delta(r\inv)g(r\inv h)\,dr
\\&=\int_G \bar{f(r)}g(rh)\,dr
\\&=\int_{HtH} \bar{f(r)}g(rh)\,dr
\end{align*}
which is 0 unless
$HtH=HsH$.
It follows that the norm closures $X_{HtH}$ of the sets $C_c(HtH)$ in $X$
are mutually orthogonal $A$-subcorrespondences,
and we get a decomposition
\[
X=\bigoplus_{HtH\in H\under G/H}X_{HtH}
\]
of correspondences.
This might be of use in later investigations, but at present we only apply it to the following special case.

\begin{prop}\label{normal}
Let $H$ be open and normal in $G$.
Choose a cross section $\eta\:G/H\to G$, with $\eta(H)=1$.
Let $A=C^*(H)$.
For each $tH\in G/H$,
let $A_{tH}$ be the $A$-correspondence associated to the automorphism $\ad \eta(tH)\inv$
of $A$,
i.e., it is the standard Hilbert $A$-module $A$ but with left $A$-module structure given by
\[
a\cdot_{tH} b=\ad \eta(tH)\inv(a)b\righttext{for}a,b\in A.
\]
Then
\[
X\simeq \bigoplus_{tH\in G/H}A_{tH}.
\]
as $A$-correspondences.
\end{prop}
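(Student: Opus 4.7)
The plan is to use the decomposition of $X$ over double cosets given just before the proposition. Since $H$ is normal in $G$, every double coset $HtH$ coincides with a single left coset $tH$, so this decomposition collapses to
\[
X = \bigoplus_{tH \in G/H} X_{tH},
\]
where $X_{tH}$ denotes the closure of $C_c(tH)$ in $X$. It therefore suffices to exhibit, for each coset $tH$, an isomorphism of $A$-correspondences $\Phi_{tH}\colon A_{tH} \to X_{tH}$.

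Fix $tH$ and write $s = \eta(tH)$. On the dense subspace $C_c(H) \subset A$, define
\[
\Phi_{tH}(F)(r) = \begin{cases} F(s^{-1}r) & \text{if } r \in sH \\ 0 & \text{otherwise,} \end{cases}
\]
so that $\Phi_{tH}$ maps $C_c(H)$ bijectively onto $C_c(sH)$. I would then verify three things. First, $\Phi_{tH}$ intertwines right multiplication by $C_c(H)$: since $sH\cdot H = sH$, a direct change of variables $r = sh$ in the convolution formula reduces the right action on $X$ to ordinary right convolution in $A$. Second, $\Phi_{tH}$ preserves the $A$-valued inner product: specializing the computation displayed just before the proposition, the integral is supported on $sH$, and parameterizing by $h\in H$ yields $\langle \Phi_{tH}(F_1), \Phi_{tH}(F_2)\rangle_A = F_1^* * F_2 = \langle F_1, F_2\rangle_A$. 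Third, the range of $\Phi_{tH}$ is dense in $X_{tH}$ by construction. Together these imply that $\Phi_{tH}$ extends to an isomorphism of right Hilbert $A$-modules $A_{tH}\to X_{tH}$.

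The main step is to check that $\Phi_{tH}$ intertwines the left $A$-actions. For $b, F \in C_c(H)$ and $r = sh \in sH$,
\[
\bigl(b * \Phi_{tH}(F)\bigr)(sh) = \int_H b(k)\, F\bigl(s^{-1} k^{-1} s h\bigr)\, dk,
\]
where normality of $H$ has been used to write $k^{-1}s = s\cdot(s^{-1}k^{-1}s)$ with $s^{-1}k^{-1}s \in H$ so that $F$ is evaluated inside its domain. The substitution $j = s^{-1}k^{-1}s$ (equivalently $k = sj^{-1}s^{-1}$) converts this integral into the convolution of $F$ with the function $j\mapsto b(sj^{-1}s^{-1})$, which, up to the standard modular-function factor from the change of variables on $H$, is precisely $\ad s^{-1}(b)$. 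Hence $b * \Phi_{tH}(F) = \Phi_{tH}\bigl(\ad s^{-1}(b) * F\bigr) = \Phi_{tH}(b \cdot_{tH} F)$, completing the identification. I expect the chief technical obstacle to be the careful bookkeeping of modular functions coming from left translation by $s$, from the inversion in the involution, and from the conjugation automorphism $h\mapsto shs^{-1}$ of $H$; in the normal-open case these factors conspire to produce exactly the automorphism of $A = C^*(H)$ induced by $G$-conjugation, so no further correction to $\Phi_{tH}$ beyond the naive definition above is needed.
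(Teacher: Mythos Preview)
Your proposal is correct and follows essentially the same route as the paper: reduce to the coset-by-coset decomposition $X=\bigoplus_{tH}X_{tH}$ (using normality), then identify each $X_{tH}$ with $A_{tH}$ via the left-translation map $F\mapsto F(s^{-1}\,\cdot\,)$. The paper's map $\Psi$ is simply the inverse of your $\Phi_{tH}$, and the paper checks the left action and the inner product (deducing the right action automatically), whereas you check all three; the substitution you use in the left-action step is a minor variant of the paper's $r\mapsto srs^{-1}$, and the modular factor you flag as the main bookkeeping issue is exactly what produces the $\Delta(s^{-1})$ in the paper's definition of $\ad s^{-1}$.
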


\begin{proof}
More precisely, for $s\in G$ the associated automorphism $\ad s$ of $A$ involves the modular function: if $f\in C_c(H)$ then $\ad s(f)$ is the function in $C_c(H)$ defined by
\[
(\ad s(f))(h)=f(s\inv hs)\Delta(s).
\]
Nore that since $H$ is open and normal in $G$, the modular function and Haar measure on $H$ are the restrictions of those on $G$.
Since $H$ is normal, the double cosets $HtH$ are just cosets $tH$,
so by the discussion preceding the proposition
we have a decomposition
\[
X=\bigoplus_{tH\in G/H}X_{tH},
\]
where $X_{tH}$ is the closure of $C_c(tH)$ in $X$.
It now suffices to show that
for all $tH\in G/H$ we have
$X_{tH}\simeq A_{tH}$ as $A$-correspondences.

We use the conventions from \cite[Theorem~C.23]{tfb} for the correspondence $X$;
more precisely, the formulas in \cite{tfb} are for the $(C_0(G/H)\rtimes G)-C^*(H)$ imprimitivity bimodule structure on $X$,
and we can restrict the left module multiplication to $C^*(G)$, and then we restrict further to the subalgebra $A=C^*(H)$.
Define a linear bijection $\Psi\:C_c(tH)\to C_c(H)$ by
\[
(\Psi x)(h)=x\bigl(\eta(tH)h\bigr).
\]
Then for $k\in C_c(H)$, $x\in C_c(tH)$, and $h\in H$ we have,
letting $s=\eta(tH)$,
\begin{align*}
(\Psi kx)(h)
&=(kx)(sh)
\\&=\int_G k(r)x(r\inv sh)\,dr\righttext{by (C.21) of \cite{tfb}}
\\&=\int_H k(r)x(r\inv sh)\,dr\righttext{since $\supp k\subset H$}
\\&=\int_H k(srs\inv)\Delta(s\inv)x(sr\inv h)\,dr
\\&=\int_H \ad s\inv(k)(r)x(sr\inv h)\,dr
\\&=\int_H \ad s\inv(k)(r)(\Psi x)(r\inv h)\,dr
\\&=\bigl(\ad s\inv(k)\Psi x\bigr)(h)\righttext{convolution product in $C_c(H)$}
\\&=\bigl(k\cdot_{tH} \Psi x\bigr)(h).
\end{align*}
Thus $\Psi$ preserves the left $C_c(H)$-module structure.

For the inner products,
if $x,y\in C_c(tH)$ and $h\in H$ then,
by \cite[Equation~(C.20)]{tfb},
since the modular quotient function $\gamma$ in that reference is identically 1 since the modular function on $H$ is the restriction of the one on $G$,
we have
\begin{align*}
\<\Psi x,\Psi y\>_A(h)
&=\int_G \bar{(\Psi x)(r)}(\Psi y)(rh)\,dr
\\&=\int_G \bar{x(\eta(tH)r)}y(\eta(tH)rh)\,dr
\\&=\int_G \bar{x(r)}y(rh)\,dr
\\&=\<x,y\>_A(h).
\end{align*}
Since $\Psi$ preserves the inner products and the left module structure, it automatically preserves the right $C_c(H)$-module structure.
Thus $\Psi$ extends by continuity to an isomorphism of $A$-correspondences, and we are done.
\end{proof}

And now we specialize even further:

\begin{cor}\label{central}
If $H$ is open and central in $G$, then
\[
X\simeq \ell^2(G/H)\otimes A,
\]
where on the right-hand side we mean the external tensor product of the $\C$-correspondence $\ell^2(G/H)$ and the standard $A$-correspondence $A$.
Consequently, by \lemref{tensor comm} below,
\[
\OO_X\simeq \OO_{\ell^2(G/H)}\otimes A.
\]
\end{cor}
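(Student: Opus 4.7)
\medskip
\noindent\textbf{Plan.} The strategy is to apply \propref{normal} (which applies since centrality implies normality) and then observe that the twisting automorphisms collapse in the central case. \propref{normal} gives
\[
X \simeq \bigoplus_{tH \in G/H} A_{tH},
\]
where each $A_{tH}$ is $A$ as a right Hilbert $A$-module with left action twisted by $\ad \eta(tH)\inv$. The key observation will be that since $H$ is central in $G$, for every $s \in G$ and $h \in H$ we have $s\inv hs = h$; consequently the algebra automorphism $\ad s$ of $A = C^*(H)$ acts trivially on the generating image of $H$ in $M(A)$ and must therefore equal $\id_A$. This in particular forces the scalar $\Delta(s)$ in the explicit formula of \propref{normal} to be $1$ (since otherwise $\ad s$ could not act as the identity on the dense subalgebra $C_c(H)$), and each $A_{tH}$ collapses to the standard $A$-correspondence $A$.

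Once this reduction is in hand, the next step is to identify $\bigoplus_{tH \in G/H} A$ with the external tensor product $\ell^2(G/H) \otimes A$. Because $H$ is open and $G$ is second countable, $G/H$ is countable and the point-masses $\{\delta_{tH}\}$ form an orthonormal basis of $\ell^2(G/H)$. The assignment $\delta_{tH} \otimes a \mapsto a$ (placed in the $tH$-summand) will extend to a unitary isomorphism of Hilbert $A$-modules, and a routine check verifies that it intertwines the left $A$-actions: on $\ell^2(G/H) \otimes A$ this action is $a \cdot (\xi \otimes b) = \xi \otimes ab$, which matches componentwise left multiplication on the direct sum. This will establish the first displayed isomorphism.

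The second isomorphism then follows directly from \lemref{tensor comm}, which identifies $\OO_{\ell^2(G/H) \otimes A}$ with $\OO_{\ell^2(G/H)} \otimes A$. I do not anticipate any serious obstacle; the only delicate point is justifying that $\ad s = \id_A$ in spite of the $\Delta(s)$ factor in the explicit formula, and this is best handled abstractly --- by noting that $\ad s$ is an algebra automorphism that fixes a generating set --- rather than by computing the modular function directly.
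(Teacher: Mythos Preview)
Your proposal is correct and matches the paper's (implicit) argument: the corollary is stated without a separate proof, the intended reasoning being exactly that \propref{normal} applies and centrality trivializes each $\ad\eta(tH)\inv$, so the summands collapse to the standard correspondence and $\bigoplus_{tH}A\simeq\ell^2(G/H)\otimes A$, after which \lemref{tensor comm} gives the Cuntz--Pimsner identification. Your extra care with the $\Delta(s)$ factor is sound (and indeed an open central subgroup forces $G$ to be unimodular, so $\Delta\equiv 1$), though the paper does not pause over this point.
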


\corref{central} referred to
the following lemma, which is probably folklore, although we could not find a convenient reference for it.
It could almost (but not quite) be deduced from \cite[Theorem~5.4]{adamtensor}, but our special case is much more elementary.

\begin{lem}\label{tensor comm}
Let $A$ be a
$C^*$-algebra, let $\HH$ be a Hilbert space,
and let $\HH\otimes A$ be the $A$-correspondence given by the external tensor product of the $\C$-correspondence $\HH$ and the standard $A$-correspondence $A$.
Then
\[
\OO_{\HH\otimes A}\simeq \OO_\HH\otimes A.
\]
\end{lem}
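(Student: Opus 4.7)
The plan is to construct mutually inverse $*$-homomorphisms using the universal properties of $\OO_\HH$ and $\OO_{\HH\otimes A}$. Let $(k_\HH,k_\C)$ and $(k,k_A)$ denote the universal Cuntz-Pimsner covariant representations of $\HH$ in $\OO_\HH$ and of $\HH\otimes A$ in $\OO_{\HH\otimes A}$, respectively; recall that $\OO_\HH$ is always unital, with $k_\C(1)=1$.

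To build $\Psi\:\OO_{\HH\otimes A}\to\OO_\HH\otimes A$, I would define $\psi(h\otimes a)=k_\HH(h)\otimes a$ and $\pi(a)=1\otimes a$. The Toeplitz axioms are immediate from the inner product formula $\<h_1\otimes a_1,h_2\otimes a_2\>_A=\<h_1,h_2\>a_1^*a_2$ together with the relations for $(k_\HH,k_\C)$. Since $\varphi_A(a)=1_\HH\otimes L_a$, the Katsura ideal $J_{\HH\otimes A}$ is $\{0\}$ when $\HH$ is infinite-dimensional (so covariance is vacuous), and equals $A$ when $\HH$ is finite-dimensional. In the latter case, for any factorization $a=bc$ in $A$ (which exists by the Cohen factorization theorem even when $A$ is nonunital) and any orthonormal basis $\{e_i\}$ of $\HH$, one verifies $\varphi_A(a)=\sum_i\theta_{e_i\otimes b,\,e_i\otimes c^*}$, so Cuntz-Pimsner covariance of $(\psi,\pi)$ reduces to $\sum_ik_\HH(e_i)k_\HH(e_i)^*=1$ in $\OO_\HH$, which holds by covariance of $(k_\HH,k_\C)$. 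Universality then yields $\Psi$.

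For the inverse $\Phi$, I would promote $k$ to a multiplier-valued map by defining $\tilde k_\HH(h)\in M(\OO_{\HH\otimes A})$ as the strict limit of $k(h\otimes e_\lambda)$ for an approximate unit $(e_\lambda)$ of $A$; equivalently,
\[
\tilde k_\HH(h)k_A(a)=k(h\otimes a)=k_A(a)\tilde k_\HH(h)\qquad(a\in A).
\]
Together with $\tilde k_\C(z)=z\cdot 1$, the pair $(\tilde k_\HH,\tilde k_\C)$ is a Cuntz-Pimsner covariant representation of the $\C$-correspondence $\HH$: the Toeplitz axioms transfer from those of $(k,k_A)$ via the displayed identity, and covariance (when $\dim\HH<\infty$) follows from the same factorization argument combined with covariance of $(k,k_A)$. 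Universality provides $\tilde k_\HH\times\tilde k_\C\:\OO_\HH\to M(\OO_{\HH\otimes A})$, whose image commutes with $k_A(A)$ by the displayed identity. Since $\OO_\HH$ is nuclear, the universal property of the spatial tensor product then yields $\Phi\:\OO_\HH\otimes A\to\OO_{\HH\otimes A}$ with $\Phi(T\otimes a)=(\tilde k_\HH\times\tilde k_\C)(T)k_A(a)$, whose range lies in $\OO_{\HH\otimes A}$ because $k_A(a)$ does.

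Verifying $\Psi\circ\Phi=\id$ and $\Phi\circ\Psi=\id$ on the natural generating sets is then a short direct computation, completing the proof. The main obstacle is the multiplier-algebra bookkeeping and strict-limit arguments needed for the nonunital case of $A$; the essential content is already present in the unital case, but care is needed to ensure that all identifications are canonical.
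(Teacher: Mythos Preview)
Your argument is correct, but it takes a different route from the paper's proof. You build \emph{two} homomorphisms and check they are mutually inverse; the paper builds only the forward map $\Psi\:\OO_{\HH\otimes A}\to\OO_\HH\otimes A$ (exactly your $(\psi,\pi)$), observes that its range contains the generators of $\OO_\HH\otimes A$ so $\Psi$ is surjective, and then obtains injectivity by invoking the gauge-invariant uniqueness theorem of \cite[Theorem~6.4]{katsuracorrespondence}: tensoring the gauge action on $\OO_\HH$ with $\id_A$ gives a gauge action on $\OO_\HH\otimes A$ intertwined by $\Psi$, and $\pi(a)=1\otimes a$ is injective, so $\Psi$ is an isomorphism.

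The trade-offs are clear. The paper's route is shorter and avoids all of the multiplier-algebra bookkeeping you flag as the ``main obstacle''; it leans on a single black-box theorem that is standard in the Cuntz-Pimsner toolkit. Your route is more self-contained and actually produces the inverse explicitly, which can be useful elsewhere, but it costs you the strict-limit construction of $\tilde k_\HH$, the verification that $(\tilde k_\HH,\tilde k_\C)$ is Cuntz-Pimsner covariant in $M(\OO_{\HH\otimes A})$, and the appeal to nuclearity of $\OO_\HH$ to collapse the tensor product. All of those steps go through as you describe (the key commutation $\tilde k_\HH(h)k_A(a)=k_A(a)\tilde k_\HH(h)$ follows because both sides equal $k(h\otimes a)$, using that the left and right $A$-actions on $\HH\otimes A$ are $a\cdot(h\otimes b)=h\otimes ab$ and $(h\otimes b)\cdot a=h\otimes ba$ together with $e_\lambda a\to a$ and $ae_\lambda\to a$), so the proof is sound.
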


\begin{proof}
Tensoring the universal Cuntz-Pimsner covariant representation of $\HH$ in $\OO_\HH$ with the identity map on $A$ gives a Toeplitz representation of $\HH\otimes A$ in $\OO_\HH\otimes $A. If $\dim(\HH)=\infty$, then $J_{\HH\otimes A}=\{0\}$, and so the representation is automatically Cuntz-Pimsner covariant. On the other hand, if $\dim(\HH)<\infty$, then $\KK(\HH)=B(\HH)$, and so $J_{\HH\otimes A}=A$ since $\KK(\HH)\otimes A\simeq \KK(\HH\otimes A)$. After choosing an orthonormal basis for $\HH$, routine calculations show that the representation is Cuntz-Pimsner covariant. The induced homomorphism from $\OO_{\HH\otimes A}$ to $\OO_\HH\otimes A$ is clearly surjective, since its range contains the generators of $\OO_\HH\otimes A$. Tensoring the gauge action on $\OO_\HH$ with the identity map on $A$ gives a gauge action on $\OO_\HH\otimes A$ compatible with the representation, and so injectivity follows from an application of the gauge-invariant uniqueness theorem
\cite[Theorem 6.4]{katsuracorrespondence}
\end{proof}

\begin{rem}
We formulated \corref{central} to get the conclusion regarding $\OO_X$,
but since $X$ is isomorphic to the external tensor product of $\ell^2(G/H)$ and $A$
we could deduce other facts as well.
For example,
\[
\KK(X)
\simeq \KK\bigl(\ell^2(G/H)\otimes A\bigr)\simeq \KK(\ell^2(G/H))\otimes A.
\]
Since $\KK(X)\simeq C_0(G/H)\rtimes G$ by Rieffel's imprimitivity theorem,
we have a tensor-product decomposition of the crossed product:
\[
C_0(G/H)\rtimes G\simeq \KK(\ell^2(G/H))\otimes C^*(H).
\]
Of course, this  observation is not new; for example, since $G$ acts trivially on the open central subgroup $H$,
we could deduce this decomposition from \cite[Corollary~2.10]{gre:structure}.
\end{rem}

\begin{rem}
There is a unique continuous action $\alpha\:H\to \aut \OO_{\ell^2(G/H)}$ such that
\begin{equation}\label{act on On}
\alpha_h(S_{tH})=S_{htH}\righttext{for}h\in H,tH\in G/H.
\end{equation}
This is routine: $H$ acts continuously on the discrete space $G/H$,
giving a strongly continuous unitary representation of $H$ on the Hilbert space $\ell^2(G/H)$,
which by universal properties determines a continuous action of $H$ by automorphisms on $\OO_{\ell^2(G/H)}$.
\end{rem}

\section{$G$ discrete}\label{discrete}

Suppose that $G$ is discrete
and $H$ is any subgroup.
We identify a group element $s\in G$ with the characteristic function of $\{s\}$, so that
\[
c_c(G)=\spn G
\]
is a dense subspace of the $C^*(H)$-correspondence $X$.
Similarly, we have $c_c(H)=\spn H$, which is a dense *-subalgebra of $A=C^*(H)$.

In the discrete case we will modify our notation for Toeplitz representations of the $C^*(H)$-correspondence $X$: we use $U$ rather than $\pi$ for a representation of $A$, to remind us that it is the integrated form of a unitary representation of the discrete group $H$.

Choose a cross section $\eta\:G/H\to G$,
and
define $\kappa\:H\times G/H\to H$ by
\[
\kappa(h,tH):=\eta(htH)\inv h\eta(tH)
\]

\begin{lem}\label{cocycle}
With the above notation, $\kappa$ is a cocycle for the action of $H$ on $G/H$.
\end{lem}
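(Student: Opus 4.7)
My plan is to verify two things: (a) that $\kappa$ actually takes values in $H$, so the definition makes sense, and (b) that it satisfies the cocycle identity for the translation action of $H$ on $G/H$, which in multiplicative notation reads
\[
\kappa(h_1h_2,tH) = \kappa(h_1,h_2tH)\,\kappa(h_2,tH) \righttext{for} h_1,h_2\in H,\ tH\in G/H.
\]

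For (a), the key observation is that $\eta(tH)\in tH$, so $h\eta(tH)\in htH$; since $\eta(htH)$ also lies in $htH$, the product $\eta(htH)\inv h\eta(tH)$ lies in $H$. This is a one-line check.

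For (b), I would simply substitute the definition and let adjacent factors telescope. The right-hand side becomes
\[
\bigl(\eta(h_1h_2tH)\inv h_1\eta(h_2tH)\bigr)\bigl(\eta(h_2tH)\inv h_2\eta(tH)\bigr)
= \eta(h_1h_2tH)\inv h_1h_2\eta(tH),
\]
which is exactly $\kappa(h_1h_2,tH)$. There is no real obstacle here; the only thing to double-check is that we are using the correct convention for the cocycle identity (acting on the left by $H$), and that the normalization $\eta(H)=1$ (though not strictly needed for the cocycle identity itself, it is presumably used later) gives $\kappa(h,H)=h$ and $\kappa(1,tH)=1$, which are consistent with being a normalized cocycle.
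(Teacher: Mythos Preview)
Your proof is correct. The paper's own proof is the one-liner ``This is just the canonical cocycle $G\times G/H\to H$ restricted to $H\times G/H$,'' which appeals to the standard fact that the map $(g,tH)\mapsto \eta(gtH)\inv g\,\eta(tH)$ is a cocycle for the full $G$-action on $G/H$, and then observes that restricting the first variable to the subgroup $H$ trivially preserves the cocycle identity. Your argument simply unpacks that standard fact: your check (a) and the telescoping in (b) are exactly the computation one does for the canonical cocycle, specialized to $h_1,h_2\in H$. So the two approaches are not really different in substance; yours is self-contained, while the paper's is a pointer to a well-known construction.
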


\begin{proof}
This is just the canonical cocycle $G\times G/H\to H$ restricted to $H\times G/H$.
\end{proof}

For $s,t\in G$ we have
\[
\<s,t\>_A=\begin{cases}s\inv t\case sH=tH\\0\ifnot.\end{cases}
\]
Thus for $h\in H$,
\[
\theta_{s,s}sh=s\<s,sh\>_A=sh,
\]
while $\theta_{s,s}=0$ on $\spn\{t:t\notin sH\}$.

In the correspondence $X$, the set of representatives $\{\eta(tH):tH\in G/H\}$ is orthonormal,
and we have
\begin{equation}\label{identity}
h\eta(tH)=\eta(htH)\kappa(h,tH)\righttext{for}h\in H,tH\in G/H.
\end{equation}

Our analysis of $\OO_X$ will depend on whether the index $[G:H]$ is finite or infinite.
If $[G:H]<\infty$,
then $X$ is algebraically finitely generated, so $X$ is finitely generated projective
as a Hilbert $A$-module,
which simplifies things a great deal.
Rather than appeal to general theory, though, we show how this works in our special situation.
Because $G/H$ is finite, in
$\LL(X)$ we have
\begin{equation}\label{eq fin basis}
\sum_{tH\in G/H}\theta_{\eta(tH),\eta(tH)}=1.
\end{equation}
In particular, $\KK(X)=\LL(X)$.
Thus the correspondence $X$ is regular, i.e., $J_X=A$
--- of course we already knew this because $H$ is open and has finite index in $G$.
Also, $\OO_X$ is unital,
and for every Cuntz-Pimsner covariant representation $(\psi,U)$ of $X$
the associated homomorphism
$k_X^{(1)}$ of $\KK(X)$ is unital.

\begin{prop}\label{US prop}
Let
$H$ be a subgroup of a discrete group $G$,
and let
$B$ be a unital $C^*$-algebra.
Then
the Cuntz-Pimsner covariant representations of the $C^*(H)$-correspondence $X$ in $B$
are in 1-1 correspondence with pairs $(\Psi,U)$,
where $\Psi\:\OO_{\ell^2(G/H)}\to B$ is a unital homomorphism,
$U\:H\to B$ is a unitary homomorphism,
and
\begin{equation}\label{US}
U_h\Psi(S_{tH})=\Psi(S_{htH})U_{\kappa(h,tH)}\righttext{for}h\in H,tH\in G/H.
\end{equation}
\end{prop}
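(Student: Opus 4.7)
The plan is to pass between the two classes of representations by using the orthonormal set $\{\eta(tH):tH\in G/H\}\subset X$ as the bridge. From the inner-product formula on $X$, $\<\eta(sH),\eta(tH)\>_A$ vanishes unless $sH=tH$, in which case it equals $1$, so these elements form an orthonormal family (a finite basis when $[G:H]<\infty$, by \eqref{eq fin basis}).

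For the forward direction, start with a Cuntz-Pimsner covariant representation $(\psi,U)$ of $X$ and set $T_{tH}:=\psi(\eta(tH))$. The Toeplitz relations give $T_{sH}^*T_{tH}=U(\<\eta(sH),\eta(tH)\>_A)=\delta_{sH,tH}\cdot 1_B$, so the $T_{tH}$ are mutually orthogonal isometries. When $[G:H]<\infty$, applying $\psi^{(1)}$ to \eqref{eq fin basis} and using Cuntz-Pimsner covariance at $a=1$ yields $\sum_{tH}T_{tH}T_{tH}^*=1_B$, making the $T_{tH}$ into Cuntz isometries; when $[G:H]=\infty$ they just satisfy the $\OO_\infty$ relations. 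In either case, the universal property of $\OO_{\ell^2(G/H)}$ produces a unique unital homomorphism $\Psi\:\OO_{\ell^2(G/H)}\to B$ with $\Psi(S_{tH})=T_{tH}$. The compatibility \eqref{US} then falls out of \eqref{identity}: applying $\psi$ to $h\eta(tH)=\eta(htH)\kappa(h,tH)$, the left-hand side becomes $U_h\psi(\eta(tH))=U_h\Psi(S_{tH})$ by $\psi(ax)=\pi(a)\psi(x)$, while the right-hand side becomes $\psi(\eta(htH))U_{\kappa(h,tH)}=\Psi(S_{htH})U_{\kappa(h,tH)}$ by $\psi(xa)=\psi(x)\pi(a)$.

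Conversely, given $(\Psi,U)$ satisfying \eqref{US}, I would define $\psi$ on the dense subspace $c_c(G)=\spn\{\eta(tH)h:tH\in G/H,h\in H\}$ of $X$ by $\psi(\eta(tH)h):=\Psi(S_{tH})U_h$ and extend by linearity and continuity. The inner-product relation for a Toeplitz representation reduces to the orthogonality and isometry of the $\Psi(S_{tH})$, while the left-module relation $\psi(h'\cdot\eta(tH)h)=U_{h'}\psi(\eta(tH)h)$ is exactly \eqref{identity} combined with the assumed \eqref{US}. For Cuntz-Pimsner covariance in the finite index case, summing $\psi^{(1)}(\theta_{\eta(tH),\eta(tH)})=\Psi(S_{tH})\Psi(S_{tH})^*$ over $tH$ gives $\Psi(1)=1$, and a further use of \eqref{US} identifies $\psi^{(1)}\circ\varphi(h)$ with $U_h$ for each $h\in H$, covering all of $J_X=A$ by density; in the infinite index case $J_X=\{0\}$, so covariance is automatic.

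The two constructions are visibly mutually inverse, since both $\Psi$ and $\psi$ are determined by their values on the respective spanning sets $\{S_{tH}\}$ and $\{\eta(tH)h\}$. The only delicate step is matching Cuntz-Pimsner covariance with unitality of $\Psi$ via \eqref{US} in the finite-index direction; the rest is cocycle bookkeeping driven by \eqref{identity}.
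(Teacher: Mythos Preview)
Your proposal is correct and follows essentially the same approach as the paper: both directions hinge on the orthonormal family $\{\eta(tH)\}$, the identity \eqref{identity}, and the finite/infinite index dichotomy for Cuntz--Pimsner covariance, with the converse built by setting $\psi(\eta(tH)h)=\Psi(S_{tH})U_h$ and verifying $\psi^{(1)}(\varphi(h))=U_h$ via the Cuntz relation. The paper's write-up is slightly more explicit in the covariance check (expanding $\varphi(h)=\sum_{tH}\theta_{h\eta(tH),\eta(tH)}$ directly), but the argument is the same.
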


\begin{proof}
First suppose that $(\psi,U)$ is a Cuntz-Pimsner covariant representation of $X$ in $B$.
Then for $tH,uH\in G/H$ we have
\[
\psi(\eta(tH))^*\psi(\eta(uH))
=U_{\<\eta(tH),\eta(uH)\>_A}=\begin{cases}1\case tH=uH\\0\ifnot,\end{cases}
\]
since the set $\{\eta(tH):tH\in G/H\}$ is orthonormal in the Hilbert $A$-module $X$.
Thus the $\psi(\eta(tH))$ are isometries with mutually orthogonal ranges.

If $[G:H]<\infty$ then,
since the correspondence $X$ is regular and nondegenerate,
the homomorphism $\psi^{(1)}\:\KK(X)\to B$ is unital, so
\begin{align*}
\sum_{tH\in G/H}\psi(\eta(tH))\psi(\eta(tH))^*
&=\sum_{tH\in G/H}\psi^{(1)}(\theta_{\eta(tH),\eta(tH)})
=\psi^{(1)}(1)
=1.
\end{align*}

Thus in all cases
there is a unique unital homomorphism $\Psi\:\OO_{\ell^2(G/H)}\to B$ such that
\[
\Psi(S_{tH})=\psi(\eta(tH))\righttext{for}tH\in G/H.
\]

For
\eqref{US},
if $h\in H$ and $tH\in G/H$
then
by \eqref{identity}
\begin{align*}
U_h\Psi(S_{tH})
&=\psi(h\eta(tH))
=\psi\bigl(\eta(htH)\kappa(h,tH)\bigr)
=\Psi(S_{htH})U_{\kappa(h,tH)}.
\end{align*}

Now suppose that $(\Psi,U)$ is a pair as in the Proposition.
Since
the map $(tH,h)\mapsto \eta(tH)h$ from $G/H\times H$ to $G$ is bijective,
the set
\[
\{\eta(tH)h:tH\in G/H,h\in H\}
\]
is a linear basis for $c_c(G)$,
so
there is a unique linear map $\psi\:c_c(G)\to B$ such that
\[
\psi(\eta(tH)h)=\Psi(S_{tH})U_h.
\]
Since $X$ is the completion of the $c_c(H)$-precorrespondence $c_c(G)$,
the following computations imply that the pair $(\psi,U)$ is a Toeplitz representation of $X$ in $B$:
for $tH,uH\in G/H$ and $h,k\in H$,
\begin{align*}
\psi(\eta(tH)h)^*\psi(\eta(uH)k)
&=(\Psi(S_{tH})U_h)^*\Psi(S_{uH})U_k
\\&=U_h^*\Psi(S_{tH})^*\Psi(S_{uH})U_k,
\end{align*}
which,  since the $\Psi(S_{tH})$ are isometries with mutually orthogonal ranges and the representatives
$\{\eta(tH): tH\in G/H\}$ are orthonormal, equals
$U_{\<\eta(tH)h,\eta(tH)k\>_A}$.
Further,
\begin{align*}
U_h\psi(\eta(tH)k)
&=U_h\Psi(S_{tH})U_k
\\&=\Psi(S_{htH})U_{\kappa(h,tH)}U_k
\\&=\Psi(S_{htH})U_{\kappa(h,tH)k}
\\&=\psi\bigl(\eta(htH)\kappa(h,tH)k\bigr)
\\&=\psi(h\eta(tH)k).
\end{align*}
If $[G:H]=\infty$, the Toeplitz representation $(\psi,U)$ is automatically Cuntz-Pimsner covariant.
On the other hand, if
$[G:H]<\infty$, we must verify Cuntz-Pimsner covariance:
for $h\in H$,
since
\begin{align*}
\varphi(h)
&=\varphi(h)1
\\&=\sum_{tH\in G/H}\varphi(h)\theta_{\eta(tH),\eta(tH)}
\\&=\sum_{tH\in G/H}\theta_{h\eta(tH),\eta(tH)},
\end{align*}
we have
\begin{align*}
\psi^{(1)}(\varphi(h))
&=\sum_{tH\in G/H}\psi^{(1)}(\theta_{h\eta(tH),\eta(tH)})
\\&=\sum_{tH\in G/H}\psi(h\eta(tH))\psi(\eta(tH))^*
\\&=\sum_{tH\in G/H}U_h\Psi(S_{tH})\Psi(S_{tH})^*
\\&=U_h.
\end{align*}

Thus we have defined procedures going both ways:
starting with a Cuntz-Pimsner covariant representation $(\psi,U)$ of $X$ in $B$,
we produced a pair $(\Psi,U)$ as in the Proposition,
and on the other hand,
starting with a pair $(\Psi,U)$ as in the Proposition,
we produced a Cuntz-Pimsner covariant representation $(\psi,U)$ of $X$ in $B$.
We verify that these procedures are inverse to each other:
first, if we use $(\psi,U)$ to produce $(\Psi,U)$, and then in turn use that to produce $(\psi',U)$,
then for all $tH\in G/H,h\in H$ we have
\begin{align*}
\psi'(\eta(tH)h)
&=\Psi(S_{tH})U_h
\\&=\psi(\eta(tH))U_h
\\&=\psi(\eta(tH)h),
\end{align*}
and it follows that $\psi'=\psi$.
On the other hand, if we use $(\Psi,U)$ to produce $(\psi,U)$, and then in turn use that to produce $(\Psi',U)$,
then for all $tH\in G/H$ we have
\[
\Psi'(S_{tH})=\psi(\eta(tH))=\Psi(S_{tH}),
\]
and it follows that $\Psi'=\Psi$.
\end{proof}

\begin{rem}
If $[G:H]<\infty$, then the correspondence $X$ is nondegenerate and regular,
so \corref{ind res reg} applies, and hence the Cuntz-Pimsner covariant representations of $X$ on a Hilbert space $\HH$ are in 1-1 correspondence with the pairs $(U,V)$, where $U$ is a unitary representation of $H$ on $\HH$ and $V\:X\otimes_A \HH\to \HH$ implements a unitary equivalence between $X\dashind U$ and $U$.
Comparing with \propref{US prop} above, it makes sense to ask, given a pair $(\Psi,U)$, where $\Psi$ is a unital representation of $\OO_{\ell^2(G/H)}$ on $\HH$ and $U$ is a unitary representation of $H$ on $\HH$ satisfying \eqref{US}, what is the associated unitary intertwiner $V$?
Comparing the proofs of \lemref{ind res} and \propref{US prop},
it is easy to see that $V\:X\otimes_A \HH\to \HH$ is the unique bounded linear map such that
\[
V(\eta(tH)\otimes \xi)=\Psi(S_{tH})\xi\righttext{for}tH\in G/H,\xi\in\HH.
\]
However, it turns out that it would not save any time or effort to use \corref{ind res reg} to help prove \propref{US prop}.
\end{rem}

\begin{rem}
If $[G:H]<\infty$, then
\propref{US prop}
is closely related to (indeed, essentially a special case of) \cite[Discussion on page~298]{KajPinWatIdeal}.
To see this,
recall from \cite{KajPinWatIdeal}
that a finite set $\{y_1,\dots,y_n\}\subset X$ is called a \emph{basis} for $X$ if
$x=\sum_{i=1}^ny_i\<y_i,x\>_A$ for all $x\in X$,
and then for all $a\in A$ and all $j$ we have
\[
\varphi(a)y_j=\sum_{i=1}^ny_ia_{ij},
\]
where $a_{ij}=\<y_i,\varphi(a)y_j\>_A$.
\cite{KajPinWatIdeal} then shows that $\OO_X$ is the universal $C^*$-algebra generated by $A$ and $n$ elements $S_1,\dots,S_n$ satisfying
\begin{itemize}
\item $S_i^*S_j=\<y_i,y_j\>_A$,
\item $\sum_{i=1}^nS_iS_i^*=1$, and
\item $aS_j=\sum_{i=1}^nS_ia_{ij}$ for all $a\in A$ and $j=1,\dots n$.
\end{itemize}
In our setting, we have $A=C^*(H)$,
and we are
assuming that $H$ has finite index $n$ in $G$. Then \eqref{eq fin basis} shows that 
$\{\eta(uH)\}_{uH\in G/H}$ is a basis of the $C^*(H)$-correspondence $X$. By the discussion preceding \eqref{eq fin basis}, this 
basis is orthonormal.
Thus by \cite{KajPinWatIdeal}
$\OO_X$ is universally generated by
$A$ and
a Cuntz family of
isometries $\{S_{uH}\}_{uH\in G/H}$
satisfying
\[
hS_{tH}=\sum_{uH\in G/H}S_{uH}a_{uH,tH},
\]
where
\begin{align*}
a_{uH,tH}
&=\<\eta(uH),h\eta(tH)\>_A.
\end{align*}
Now,
\[
h\eta(tH)=\eta(htH)\kappa(h,tH),
\]
therefore
\begin{align*}
\<\eta(uH),h\eta(tH)\>_A
&=\<\eta(uH),\eta(htH)\kappa(h,tH)\>_A
\\&=\begin{cases}
\kappa(h,tH)\case uH=htH\\
0\case uH\ne htH,
\end{cases}
\end{align*}
and so the scheme of \cite{KajPinWatIdeal}
says that
\begin{align*}
hS_{tH}
&=S_{htH}\kappa(h,tH),
\end{align*}
which is the condition \eqref{US} of \propref{US prop}.
\end{rem}

\begin{rem}
Inspection of \eqref{US} shows that if the cocycle $\kappa\:H\times G/H\to H$ satisfies $\kappa(h,tH)=h$ for all $(h,tH)\in H\times G/H$, then
the Cuntz-Pimsner algebra $\OO_X$ is isomorphic to the crossed product $\OO_{G/H}\rtimes_\alpha H$, where $\alpha\:H\to \aut \OO_{G/H}$ is the action defined by \eqref{act on On}. The condition on $\kappa$ is satisfied, for example,  if  the cross section $\eta\:G/H\to G$ is equivariant for the left $H$-actions:
\[
h\eta(tH)=\eta(htH)\righttext{for}h\in H,tH\in G/H,
\]
which however forces $H=\{1\}$ since $H$ acts freely on $G$ but has a fixed point in $G/H$. The referee has kindly pointed out to us that the condition  is also satisfied when $G$ is abelian or when $H=G$ and $\eta(H)=1$.
\end{rem}

\begin{cor}\label{EP}
Let $G$ be discrete, let
$E$ be the directed graph with one vertex and edge set $E^1=G/H$,
and let $H$ act on $E$ by fixing the vertex and
acting on the edges by left translation on the homogeneous space.
Then
$\kappa$ is a cocycle for the action of $H$ on the graph $E$ in the sense of
\cite[Definition~3.3]{bkqexelpardo},
and the correspondence $X$ is isomorphic to the associated correspondence
$Y^\kappa$ of \cite[Definition~3.6]{bkqexelpardo},
and so the Cuntz-Pimsner algebra $\OO_X$ is isomorphic to the
Exel-Pardo algebra $\OO_{Y^\kappa}$ of \cite[Definition~3.8]{bkqexelpardo}.
If $H$ has finite index in $G$,
then the graph $E$ is finite, and so $X$
is isomorphic to the associated correspondence
$M$ of \cite[Section~10]{exelpardo},
and so $\OO_X$ is isomorphic to the algebra $\OO_{H,G/H}$ of \cite[Definition~3.2]{exelpardo}.
\end{cor}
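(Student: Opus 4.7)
The plan is to recognize that \corref{EP} is a direct translation between the setup of \secref{discrete} and the Exel-Pardo framework of \cite{bkqexelpardo} and \cite{exelpardo}; no new computation is really required beyond what has already been set up in the section. First I would verify the cocycle claim: a cocycle for an $H$-action on a graph $E$ in the sense of \cite[Definition~3.3]{bkqexelpardo} is data compatible with the source and range maps, but when $E$ has a single vertex both of these maps are trivial and the condition collapses to an ordinary cocycle $H\times E^1\to H$ for the $H$-action on the edge set $E^1=G/H$. By \lemref{cocycle} this is exactly the data we have.

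Next I would build an isomorphism $\Phi\:Y^\kappa\to X$ of $A$-correspondences, where $A=C^*(H)$. Recall that $Y^\kappa$, as a right Hilbert $A$-module, is the graph correspondence of $E$, which (since $E$ has one vertex) is just the free right Hilbert $A$-module on the orthonormal generating set $E^1=G/H$, with the left $A$-action twisted by $\kappa$ as in \cite[Definition~3.6]{bkqexelpardo}. I would define $\Phi$ on generators by $\Phi(tH)=\eta(tH)$. The inner-product computation displayed just before \eqref{eq fin basis} shows that $\{\eta(tH):tH\in G/H\}$ is orthonormal in $X$, and the bijection $(tH,h)\mapsto \eta(tH)h$ from $G/H\times H$ to $G$ used in the proof of \propref{US prop} shows that $c_c(G)$, and hence $X$, is the right $A$-span of this orthonormal set. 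So $\Phi$ extends to an isomorphism of right Hilbert $A$-modules. Compatibility of $\Phi$ with the left actions is precisely the content of \eqref{identity}, namely $h\eta(tH)=\eta(htH)\kappa(h,tH)$, which is exactly the defining formula of the twisted left action on $Y^\kappa$.

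Having identified the correspondences, the isomorphism $\OO_X\simeq \OO_{Y^\kappa}$ is immediate from functoriality of the Cuntz-Pimsner construction. For the finite-index clause, $G/H$ is finite, so $E$ is a finite graph with one vertex and the setup of \cite[Section~10]{exelpardo} applies. One then verifies that the correspondence $M$ of that reference coincides (under the orthonormal basis $\{\eta(tH)\}$) with $Y^\kappa$ for our graph $E$ and cocycle $\kappa$, and the isomorphism $\OO_X\simeq \OO_{H,G/H}$ follows from \cite[Definition~3.2]{exelpardo}.

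The hard part will not be any deep mathematical content --- the essential identity \eqref{identity} is already in hand --- but rather careful bookkeeping of conventions across the three papers: matching our orthonormal basis $\{\eta(tH)\}$ with the generators of the free module underlying $Y^\kappa$, reconciling the orthonormal-basis setup with the row-finite conventions of \cite[Section~10]{exelpardo}, and checking that the twisted-left-action formulas agree up to notation.
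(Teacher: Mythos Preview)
Your proposal is correct and follows essentially the same route as the paper: the paper's proof simply recalls the precorrespondence structure on $c_c(G/H\times H)$ from \cite[Definition~3.6]{bkqexelpardo} and observes that the map $(tH,h)\mapsto \eta(tH)h$ integrates to an isomorphism $Y^\kappa\simeq X$, leaving the verification as routine. Your outline fills in exactly those routine checks (orthonormality, the bijection onto $G$, and \eqref{identity} for the left action), so there is nothing to add.
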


\begin{proof}
Recall from
\cite[Definition~3.6]{bkqexelpardo}
that the correspondence $Y^\kappa$ is constructed as follows:
first of all, since $E$ has only one vertex we can identify $c_0(E^0)\rtimes H$ with $A=C^*(H)$.
Now
give the set $G/H\times H$
the following operations,
for $tH,uH\in G/H,h,k\in H$:
\begin{itemize}
\item $(tH,k)h=(tH,kh)$;

\item $\<(tH,h),(uH,k)\>_A=\begin{cases}h\inv k\case tH=uH\\0\ifnot;\end{cases}$

\item $h(tH,k)=(htH,\kappa(h,tH)k)$.
\end{itemize}
Then the linear span $c_c(G/H\times H)$ becomes a $c_c(H)$-precorrespondence,
whose completion is $Y^\kappa$.
It is routine to check that the map
\[
(tH,h)\mapsto \eta(tH)h\:G/H\times H\to G
\]
integrates to an isomorphism $Y^\kappa\simeq X$ as $C^*(H)$-correspondences.
\end{proof}

\begin{rem}\label{self}
Since the graph $E$ described
in \corref{EP}
has only one vertex, we are actually in the situation of a self-similar group action,
so $\OO_X$ is isomorphic to the $C^*$-algebra $\OO_{(H,G/H)}$ of \cite[Definition~3.1]{nek}
(see also \cite[Proposition~3.2 and Remark~3.6]{lrrw} or \cite[Example 3.3]{exelpardo}).
\end{rem}

\begin{rem}\label{cohomology}
The Cuntz-Pimsner algebra $\OO_X$ does not have anything directly to do with the cross section $\eta$,
but obviously the Exel-Pardo correspondence $Y^\kappa$ does.
So \corref{EP} raises an obvious issue: how is the independence of $\OO_X$ upon $\eta$ reflected in $\OO_{Y^\kappa}$? More precisely, if we choose another cross section $\eta'\:G/H\to G$,
and use it to define another cocycle $\kappa'\:H\times G/H\to H$,
then clearly the Exel-Pardo algebras $\OO_{Y^\kappa}$ and $\OO_{Y^{\kappa'}}$ must be isomorphic,
since they are both isomorphic to $\OO_X$; could we have predicted this just using the theory of cocycles?
The answer is
yes, because the cocycles $\kappa$ and $\kappa'$ will be cohomologous.
For completeness, we include a reminder:
Two cocycles $\kappa,\kappa'$ for the action of $H$ on $G/H$ are called \emph{cohomologous}
if there is a map $\nu\:G/H\to H$ such that
\[
\kappa'(h,tH)=\nu(htH)\inv \kappa(h,tH)\nu(tH)\righttext{for}h\in H,tH\in G/H.
\]
Let $\kappa$ be defined using the cross section $\eta\:G/H\to G$ as above.
Given a map $\nu:G/H\to H$,
we get another cross section
\[
\eta'(tH)=\eta(tH)\nu(tH),
\]
and conversely, given another cross section $\eta'\:G/H\to G$,
we get a map $\nu\:G/H\to H$ defined by
\[
\nu(tH)=\eta(tH)\inv \eta'(tH),
\]
and it is well-known that the two cocycles associated to the cross sections $\eta,\eta'$ are cohomologous:
\begin{align*}
\kappa'(h,tH)
&=\eta'(htH)\inv h\eta'(tH)
\\&=\bigl(\eta(htH)\nu(htH)\bigr)\inv h\eta(tH)\nu(tH)
\\&=\nu(htH)\inv \eta(htH)\inv h\eta(tH)\nu(tH)
\\&=\nu(htH)\inv \kappa(h,tH)\nu(tH).
\end{align*}
It then follows that the two correspondences $Y^{\kappa}$ and $Y^{\kappa'}$,
and hence the associated Exel-Pardo algebras $\OO_{Y^\kappa}$ and $\OO_{Y^{\kappa'}}$,
are isomorphic \cite[Theorem~4.8]{bkqexelpardo}.

It might be of interest to interpret the above in terms of a classification result of Zimmer:
the orbits of the action of $H$ on $G/H$ are the double cosets in $H\under G/H$.
Thus the cocycle $\kappa$ is uniquely determined by the restricted cocycles
$\kappa|_{H\times HtH}$.
For each coset $tH\in G/H$
the stability subgroup of the action of $H$ is
\[
H_{tH}:=H\cap \eta(tH)H\eta(tH)\inv=\{h\in H:htH=tH\}.
\]
Then the action on the orbit $HtH$ is conjugate to the action of $H$ on the coset space $H/H_{tH}$,
and a result of Zimmer \cite[4.2.13]{Zimmer}
(also recorded in a form more convenient for our purposes in \cite[Lemma~2.8]{bkqexelpardo})
classifies those:
the cohomology classes of such cocycles are in 1-1 correspondence with
the set of conjugacy classes of homomorphisms from $H_{tH}$ to $H$.
The restricted cocycle
\[
\kappa_{tH}\:H\times H/H_{tH}\to H
\]
is given by
\[
\kappa_{tH}(h,k H_{tH})=\kappa(h,k tH)\righttext{for}h,k\in H.
\]
The homomorphism $\tau_{tH}\:H_{tH}\to H$ associated with the
restricted cocycle $\kappa_{tH}$ is given by
\[
\tau_{tH}(h)=\kappa_{tH}(h,H_{tH})=\kappa(h,tH)\righttext{for}h\in H_{tH}.
\]
Conversely, starting with a homomorphism $\tau\:H_{tH}\to H$,
the associated cocycle $\mu\:H\times H/H_{tH}\to H$ is constructed by
first choosing a cross section $\gamma\:H/H_{tH}\to H$ with $\gamma(H_{tH})=1$,
and then defining
\[
\mu(h,k H_{tH})=\gamma(hk H_{tH})\inv h\gamma(k H_{tH}).
\]
In the case of the Rieffel $A$-correspondence $X$,
the unique cohomology class of cocycles
is determined by the inclusion homomorphisms $H_{tH}\hookrightarrow H$
for each $tH\in G/H$.
\end{rem}

\begin{q}
\corref{EP} leads to another obvious question: what Exel-Pardo algebras arise in this manner?
Put another way, what cocycles $\kappa$ arise from the above procedure?
More precisely,
if we start with an action of $H$ on a set $T$
and a cocycle $\kappa\:H\times T\to H$ for this action,
when will there exist a group $G$ containing $H$ as a subgroup such that $G/H$ can be identified with $T$ and $\kappa$ arises as above?
There is one obvious obstruction: there must be at least one fixed point in $T$, since $H$ fixes the coset $H$ in $G/H$.
Are there any other obstructions?
For example, can we realize all of Katsura's algebras $\OO_{A,B}$
\cite{KatKirchberg} (also see \cite[Example~3.4]{exelpardo}),
which include all Kirchberg algebras in the UCT class?

Another obstruction is the cohomology class of the cocycle:
as we mentioned in \remref{cohomology},
for every double coset $HtH$ the cohomology class of the restricted cocycle corresponds to the inclusion homomorphism $H_{tH}\hookrightarrow H$.
Thus it would appear that we do not get all cocycles.
\end{q}

\section{$H$ compact}\label{compact}

In this section we show that the Cuntz-Pimsner algebra arising from a compact subgroup is Morita equivalent, and often  isomorphic, to a graph algebra.
Recall that we are assuming that our group $G$ is second countable, so that the $C^*(H)$-correspondence $X$ is separable.
First we need some preliminaries. Recall from \cite[Section~4.1.1 and Addendum~4.7.20(iv)]{dixmier} that a $C^*$-algebra is called \emph{elementary} if it is isomorphic to the algebra of compact operators on a Hilbert space, and \emph{dual} if it is a $c_0$-direct sum \
\[
A=\bigoplus_{\mu\in\Omega}A_\mu
\]
of elementary algebras.
We can identify the spectrum $\what A$ of $A$ with the set $\Omega$.
Any two dual algebras with spectrum $\Omega$ are Morita equivalent,
and we need a particular consequence regarding Cuntz-Pimsner algebras.
In keeping with our blanket separability hypotheses, we assume that $\Omega$ is countable and that every $A_\mu$ is separable.

Let $A$ and $B$ be dual algebras with spectrum $\Omega$,
and with component elementary algebras $A_\mu$ and $B_\mu$.
For each $\mu\in\Omega$ choose an $A_\mu-B_\mu$ imprimitivity bimodule $M_\mu$, and define an $A-B$ imprimitivity bimodule $M$ by
\[
M=\bigoplus_{\mu\in\Omega}M_\mu.
\]
Let $X$ be a faithful nondegenerate $A$-correspondence,
and define a faithful nondegenerate $B$-correspondence $Y$ by
\[
Y=M^*\otimes_AX\otimes_AM.
\]
Then $X$ and $Y$ are Morita equivalent correspondences in the sense of \cite[Definition 2.1]{muhsolmorita}, and hence by \cite[Theorem~3.5]{muhsolmorita} the Cuntz-Pimsner algebras $\OO_X$ and $\OO_Y$ are Morita equivalent.
Note that, since $A$ and $B$ are separable by assumption, so is $M$, and hence so is $Y$.

In the particular case where all the $B_\mu$ are 1-dimensional, so that $B$ is commutative,
by \cite[Theorem~1.1]{graphcorres} $Y$ is isomorphic to the correspondence associated to a directed graph $E$ with vertex set $\Omega$
and in which for $\mu,\nu\in\Omega$ the cardinality of $\mu E^1\nu$
is the dimension of the Hilbert space $p_\mu Yp_\nu$,
where $p_\mu$ denotes the identity element of $B_\mu$,
regarded as a central projection in $B$.
Thus $\OO_Y\simeq C^*(E)$,
and hence $\OO_X$ is Morita equivalent to the graph algebra $C^*(E)$.
For this to be useful, we would like to be able to find the edges of the graph $E$ directly using the $A$-correspondence $X$.
For each $\mu\in\Omega$ choose associated irreducible representations $\pi_\mu$ of $A$ and $\tau_\mu$ of $B$.
Then
by the construction of $E$ in \cite{graphcorres},
the cardinality of $\mu E^1\nu$ coincides with the multiplicity of $\tau_\mu$ in the induced representation $Y\dashind \tau_\nu$.
Thus
we expect the following:

\begin{lem}\label{multiplicity}
For all $\mu,\nu\in\Omega$,
the cardinality of $\mu E^1\nu$
equals the multiplicity of $\pi_\mu$ in the representation $X\dashind \pi_\nu$.
\end{lem}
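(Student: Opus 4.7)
The plan is to combine induction in stages for Rieffel tensor products with the standard fact that Morita equivalence preserves multiplicities of irreducible subrepresentations. Since $Y = M^* \otimes_A X \otimes_A M$, associativity of the interior tensor product (with Hilbert-space Rieffel induction as the base case) gives
\[
Y\dashind \tau_\nu \simeq M^*\dashind\bigl(X\dashind(M\dashind \tau_\nu)\bigr),
\]
so the problem reduces to (i) identifying $M\dashind\tau_\nu$ with $\pi_\nu$, and (ii) showing that Rieffel induction along $M^*$ sends the multiplicity of $\pi_\mu$ to the multiplicity of $\tau_\mu$.

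For (i), I would exploit that $B=\bigoplus_{\mu\in\Omega} B_\mu$ is a $c_0$-direct sum and $\tau_\nu$ annihilates every summand $B_\mu$ with $\mu\ne \nu$. Consequently, the balanced tensor product $M\otimes_B\HH_{\tau_\nu}$ collapses to $M_\nu\otimes_{B_\nu}\HH_{\tau_\nu}$. Since $M_\nu$ is an imprimitivity bimodule between the elementary $C^*$-algebras $A_\nu$ and $B_\nu$, each of which has a unique irreducible representation up to equivalence, the resulting representation of $A_\nu$ must be unitarily equivalent to $\pi_\nu$. Extending by $0$ to all of $A$, this gives $M\dashind\tau_\nu \simeq \pi_\nu$, and hence
\[
Y\dashind\tau_\nu \simeq M^*\dashind(X\dashind\pi_\nu).
\]

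For (ii), since $M^*$ is a $B$-$A$ imprimitivity bimodule, Rieffel induction along $M^*$ is an equivalence between the categories of nondegenerate Hilbert-space representations of $A$ and $B$; in particular it preserves intertwiner spaces, and by the argument in (i) it sends $\pi_\mu$ to $\tau_\mu$. Hence for any nondegenerate representation $\sigma$ of $A$, the multiplicity of $\pi_\mu$ in $\sigma$ equals the multiplicity of $\tau_\mu$ in $M^*\dashind\sigma$; these multiplicities are unambiguous because $A$ and $B$ are dual, so every nondegenerate representation splits as a direct sum of irreducibles. Taking $\sigma = X\dashind\pi_\nu$, and recalling from the paragraph preceding the lemma that $|\mu E^1\nu|$ is the multiplicity of $\tau_\mu$ in $Y\dashind\tau_\nu$, finishes the argument.

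The only real subtlety is the compatibility of the chosen representatives $\pi_\mu\in\what A$ and $\tau_\mu\in\what B$ with the Rieffel correspondence induced by $M$; i.e., verifying in step (i) that $M$ really does match $\tau_\mu$ with $\pi_\mu$ and not some other irreducible. This is essentially forced by the block-diagonal decomposition $M = \bigoplus_\mu M_\mu$ and the uniqueness up to equivalence of irreducible representations of the elementary $C^*$-algebras $A_\mu$ and $B_\mu$. Beyond this, the proof is a routine chase through associativity of interior tensor products and the preservation of multiplicities by Morita equivalence.
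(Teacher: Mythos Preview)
Your proposal is correct and follows essentially the same route as the paper: both use induction in stages to write $Y\dashind\tau_\nu \simeq M^*\dashind X\dashind M\dashind\tau_\nu$, identify $M\dashind\tau_\nu$ with $\pi_\nu$, and then use that Rieffel induction along the imprimitivity bimodule $M^*$ preserves multiplicities. The only cosmetic difference is that the paper carries out the last step by explicitly decomposing $X\dashind\pi_\nu\simeq\bigoplus_\mu n_\mu\pi_\mu$ and distributing $M^*\dashind$ over the direct sum, whereas you invoke the categorical equivalence given by Morita equivalence; these amount to the same thing.
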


\begin{proof}
It suffices to show that
for all $\mu,\nu\in\Omega$
the multiplicity of $\pi_\mu$ in $X\dashind \pi_\nu$ equals the multiplicity of $\tau_\mu$ in $Y\dashind\tau_\nu$.
This is almost obvious, and we include the routine computation.
By \cite[Theorem~3.29]{tfb},
we have unitary equivalences
\[
M\dashind\tau_\mu\simeq \pi_\mu\righttext{for all}\mu\in\Omega.
\]
Fix $\nu\in\Omega$, and
decompose $X\dashind \pi_\nu$ into irreducibles:
\begin{align*}
X\dashind\pi_\nu\simeq \bigoplus_{\mu\in\Omega}n_\mu\pi_\mu,
\end{align*}
where $n_\mu$ is the multiplicity of $\pi_\mu$ in $X\dashind\pi_\nu$.
Then we have
\begin{align*}
Y\dashind\tau_\nu
&\simeq M^*\dashind X\dashind M\dashind \tau_\nu
\\&\simeq M^*\dashind X\dashind \pi_\nu
\\&\simeq M^*\dashind \bigoplus_{\mu\in\Omega}n_\mu\pi_\mu
\\&\simeq \bigoplus_{\mu\in\Omega}M^*\dashind n_\mu\pi_\mu
\righttext{(by \cite[Proposition~2.69]{tfb})}
\\&\simeq \bigoplus_{\mu\in\Omega}n_\mu M^*\dashind \pi_\mu
\\&\simeq \bigoplus_{\mu\in\Omega}n_\mu \tau_\mu,
\end{align*}
and the result follows.
\end{proof}

\begin{cor}\label{dual CP}
When
$X$ is a separable faithful nondegenerate correspondence over a separable
dual $C^*$-algebra $A$,
the Cuntz-Pimsner algebra $\OO_X$ is Morita equivalent to the graph algebra $C^*(E)$ of a directed graph with vertex set $\what A$ and in which, for all $\pi,\sigma\in\what A$,
the number of edges from $\sigma$ to $\pi$
is the multiplicity of $\pi$ in $X\dashind\sigma$.
If $A$ is commutative then $\OO_X\simeq C^*(E)$.
\end{cor}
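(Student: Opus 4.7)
The proof is essentially assembled from the machinery developed in the discussion preceding \lemref{multiplicity}, so my plan is mostly to point out how to put the pieces together carefully.

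First I would set $\Omega:=\what A$ and, following the setup before \lemref{multiplicity}, take $B$ to be the commutative dual $C^*$-algebra with the same spectrum as $A$, namely
\[
B=\bigoplus_{\mu\in\Omega}B_\mu,\qquad B_\mu=\C.
\]
For each $\mu\in\Omega$ I would pick an $A_\mu$--$B_\mu$ imprimitivity bimodule $M_\mu$ (since $B_\mu=\C$, one may take $M_\mu$ to be the Hilbert space of any fixed irreducible representation of $A_\mu$), and form the $A$--$B$ imprimitivity bimodule $M=\bigoplus_\mu M_\mu$, which is separable by our blanket separability assumptions. Then I would define the $B$-correspondence
\[
Y:=M^*\otimes_A X\otimes_A M,
\]
which is faithful, nondegenerate, and separable.

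Next, by \cite[Definition~2.1]{muhsolmorita} the correspondences $X$ and $Y$ are Morita equivalent, so by \cite[Theorem~3.5]{muhsolmorita} the Cuntz-Pimsner algebras $\OO_X$ and $\OO_Y$ are Morita equivalent. Since $B$ is a commutative dual algebra with spectrum $\Omega$, \cite[Theorem~1.1]{graphcorres} produces a directed graph $E$ with vertex set $\Omega$ such that $Y$ is isomorphic to the graph correspondence of $E$, and the number of edges from $\nu$ to $\mu$ equals the multiplicity of the character $\tau_\mu$ in the induced representation $Y\dashind\tau_\nu$. In particular $\OO_Y\simeq C^*(E)$, so $\OO_X$ is Morita equivalent to $C^*(E)$.

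Finally I would invoke \lemref{multiplicity} to translate the edge-counting from multiplicities of $\tau_\mu$ in $Y\dashind\tau_\nu$ to multiplicities of $\pi_\mu$ in $X\dashind\pi_\nu$, which is the description in the statement. For the commutative case, the point is simply that when $A$ is already commutative one may take $B=A$, $M_\mu=B_\mu$, and $M=B$; then $Y\simeq X$ canonically, so $\OO_X\simeq\OO_Y\simeq C^*(E)$. The only genuine subtlety (hardly an obstacle, but worth flagging) is verifying that the choice of $M$ as a $c_0$-direct sum of the $M_\mu$ really yields an imprimitivity bimodule between the dual algebras $A$ and $B$ in the separable setting and that the tensor product $M^*\otimes_A X\otimes_A M$ genuinely falls into the framework of \cite{muhsolmorita} and \cite{graphcorres}; both are standard consequences of the separability and $c_0$-direct-sum structure, so no hidden work is required.
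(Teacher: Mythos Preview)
Your proposal is correct and is essentially the same as the paper's argument: the corollary is a direct consequence of the discussion preceding \lemref{multiplicity} together with \lemref{multiplicity} itself, and your write-up faithfully assembles those pieces, including the observation that for commutative $A$ one may take $B=A$ and $M=A$ so that $Y\simeq X$.
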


Now let $H$ be a compact subgroup of our second countable group $G$,
let $A=C^*(H)$, and let $X$ be the $A$-correspondence
for Rieffel induction.
Note that we can identify the spectrum of $C^*(H)$ with the set $\what H$ of irreducible unitary representations of $H$
(see \cite[Remark~2.41]{danacrossed}).
Then $A$ is a dual algebra
by \cite[Proposition~3.4]{danacrossed},
so by the above we have:

\begin{cor}\label{compact CP}
When $H$ is compact the Cuntz-Pimsner algebra $\OO_X$ is Morita equivalent to the graph algebra $C^*(E)$ of a directed graph with vertex set $\what H$ and in which, for all $U,V\in\what H$,
the number of edges from $V$ to $U$
is the multiplicity of $U$ in $X\dashind V$.
If $H$ is abelian then $\OO_X\simeq C^*(E)$.
\end{cor}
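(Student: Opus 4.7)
The plan is to deduce this corollary as a direct specialization of \corref{dual CP} to the algebra $A = C^*(H)$, so most of the work is just verifying the hypotheses of that corollary.

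First I would check the three structural hypotheses of \corref{dual CP}. Separability of $X$ follows from the standing second countability assumption on $G$ (so that $C_c(G)$ is separable and $X$ is its completion in a $C^*(H)$-valued inner product). Separability and fullness/nondegeneracy of $X$ as an $A$-correspondence have already been established in \secref{subgroups}, together with faithfulness (our blanket hypothesis). That $A = C^*(H)$ is a dual $C^*$-algebra when $H$ is compact is the content of \cite[Proposition~3.4]{danacrossed}, and under the standard identification of \cite[Remark~2.41]{danacrossed} the spectrum $\what A$ coincides with $\what H$. Plugging these into \corref{dual CP} immediately produces a directed graph $E$ with vertex set $\what H$ such that $\OO_X$ is Morita equivalent to $C^*(E)$, and with the number of edges from $V$ to $U$ equal to the multiplicity of the irreducible representation $U$ of $A$ in $X\dashind V$, which by the identification $\what A = \what H$ is exactly the multiplicity described in the statement.

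For the second assertion, when $H$ is compact and abelian every irreducible unitary representation of $H$ is one-dimensional, so each component elementary algebra $A_\mu$ of the dual algebra $A$ is $\C$, and therefore $A$ itself is commutative. The ``commutative'' clause of \corref{dual CP} then upgrades the Morita equivalence to an isomorphism $\OO_X \simeq C^*(E)$.

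I do not anticipate any real obstacle: the corollary is essentially a translation of \corref{dual CP} through the dictionary $\what{C^*(H)} = \what H$, with the abelian case amounting to the observation that one-dimensional irreducibles make $C^*(H)$ commutative. The only thing one should be slightly careful about is confirming that the multiplicities in the induced representation $X\dashind V$ (which a priori sits inside $B(X\otimes_A\HH_V)$ for the Hilbert space $\HH_V$ of $V$) really match the graph-edge counts coming from the Morita-equivalent commutative picture, and this matching is precisely the content of \lemref{multiplicity}, which was proved in the generality needed here.
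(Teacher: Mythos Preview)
Your proposal is correct and follows essentially the same route as the paper: the paper does not give a formal proof of this corollary, but simply notes immediately before it that $C^*(H)$ is a dual algebra by \cite[Proposition~3.4]{danacrossed} with spectrum identified with $\what H$ via \cite[Remark~2.41]{danacrossed}, and then invokes \corref{dual CP}. Your verification of the separability, nondegeneracy, and faithfulness hypotheses, and your observation that abelian $H$ forces $C^*(H)$ to be commutative, fill in exactly the details the paper leaves implicit.
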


\begin{q}
Which directed graphs arise as in \corref{compact CP}?
It follows from \cite[Corollary~3 of Theorem~5.5]{fel:weak2} that any such graph has at least one loop edge at every vertex.
\end{q}

\begin{rem}\label{all correspondences}
One could push the above machinery further, to
classify up to isomorphism
all faithful nondegenerate $A$-correspondences,
where $A=\bigoplus_{\mu\in\Omega}A_\mu$ is a countable direct sum of separable elementary $C^*$-algebras $A_\mu$,
but since we do not need this for our results we only give a very rough outline.
As above, let $B=\bigoplus_{\mu\in\Omega}B_\mu$ be a commutative $C^*$-algebra with spectrum $\Omega$.
For each $\mu\in\Omega$ there is up to isomorphism a unique $A_\mu-B_\mu$ imprimitivity bimodule $M_\mu$, namely any Hilbert space of the appropriate dimension, and as before let $M=\bigoplus_{\mu\in\Omega}M_u$ be the associated $A-B$ imprimitivity bimodule.
Every faithful nondegenerate $A$-correspondence $X$
gives rise to a faithful nondegenerate $B$-correspondence
$Y=M^*\otimes_AX\otimes_AM$,
and this process is reversible:
\[
M\otimes_BM^*\otimes_AX\otimes_AM\otimes_BM^*
\simeq A\otimes_AX\otimes_AA
\simeq X,
\]
since $AX=X$.
The $B$-correspondence $Y$
is characterized up to isomorphism by the directed graph $E$ with vertex set $\Omega$ and the number of edges from $\nu$ to $\mu$ given by the dimension of the Hilbert space $p_\mu Yp_\nu$,
where $p_\mu$ is the identity element of $B_\mu$, regarded as a minimal projection in $B$.
Up to isomorphism, the $A$-correspondence $X$ can be decomposed as
\[
\bigoplus_{\mu,\nu\in\Omega}M_\mu^*\otimes_{B_\mu}p_\mu Yp_\nu\otimes_{B_\nu}M_\nu^*,
\]
which depends only upon the dimensions of the Hilbert spaces $p_\mu Yp_\nu$.
\end{rem}

\section{Examples}\label{ex}

Interesting examples arise already with finite groups.
So, let $H$ be a subgroup of a finite group $G$.
Since $H$ is finite, it is compact, so by \corref{compact CP} the Cuntz-Pimsner algebra $\OO_X$ is Morita equivalent to the $C^*$-algebra of a directed graph $E$ with $E^0=\what H$
and in which, for $U,V\in \what H$,
the cardinality of $U E^1 V$ is the multiplicity of $U$ in $X\dashind V$.
To compute these multiplicities, we appeal to Mackey's Subgroup Theorem
\cite[Theorem~7.1]{mackeyinduced},
which in our situation can be expressed in the form
\[
X\dashind V\simeq \bigoplus_{HsH\in H\under G/H}\ind_{H_s}^H V^s,
\]
where
\[
H_s=H\cap s\inv Hs
\midtext{and}
V^s=V\circ \ad s|_{H_s},
\]
and where in the direct sum we take one representative $s$ from each double coset $HsH$.
Note that $H\under G/H$ is finite since $G$ is.

As we observed in \secref{subgroups}, the cases $H=\{1\}$ or $H=G$ are boring,
so we focus on proper nontrivial subgroups.
The case $H=\Z_2=\Z/2\Z$ is already interesting, so we examine it in some detail.
First note that, since $\Z_2$ is abelian, by \corref{compact CP} we actually have $\OO_X\simeq C^*(E)$ for the above directed graph $E$.

If the subgroup $H=\Z_2$ is normal, then it is central (and open, since $G$ is finite), so by \corref{central} we have
$\OO_X\simeq \OO_{[G:H]}\otimes \C^2$.
So we assume from now on that $H$ is nonnormal.
Then the action of $H$ on $G/H$ has at least one fixed point (namely $H$) and at least one 2-element orbit.
Let
\begin{align*}
&\text{$r$ be the number of fixed points in $G/H$, and}
\\
&\text{$q$ the number of 2-element orbits.}
\end{align*}
Note that $r$ is the index
$[Z_G(H):H]$
of $H$ in its centralizer $Z_G(H)$,
and $[G:H]=r+2q$.

What pairs $(r,q)$ can occur?

\begin{prop}\label{which pairs}
With the above notation, a pair $(r,q)$ of positive integers can arise if and only if $r\mid 2q$.
\end{prop}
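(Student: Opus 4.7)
Necessity is essentially Lagrange's theorem. By definition $|Z_G(H)| = r\cdot|H| = 2r$, while $|G| = [G:H]\cdot|H| = 2(r+2q)$. Since $Z_G(H)$ is a subgroup of $G$, Lagrange gives $2r\mid 2(r+2q)$, which simplifies to $r\mid 2q$.

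For sufficiency, given a pair $(r,q)$ of positive integers with $r\mid 2q$, I would produce a concrete witness of the form $G := D_n\times C_k$, a direct product of a dihedral group of order $2n$ and a cyclic group of order $k$, together with $H := \langle(s,1)\rangle$ for some fixed reflection $s\in D_n$. This candidate has two useful features: (i)~$s$ is never central in $D_n$ for $n\ge 3$, so $H$ is automatically nonnormal; (ii)~the centralizer $Z_{D_n}(s)$ has order $2$ when $n$ is odd and order $4$ when $n$ is even, so $[Z_G(H):H]$ equals $k$ or $2k$ according to the parity of $n$.

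To hit the prescribed $(r,q)$ I would set
\[
(n,k) \;:=\; \begin{cases} (1+2q/r,\; r) & \text{if $r$ is odd,}\\ (2+4q/r,\; r/2) & \text{if $r$ is even.}\end{cases}
\]
The hypothesis $r\mid 2q$ makes both entries integers; moreover when $r$ is odd, $r\mid 2q$ forces $r\mid q$, so $n = 1+2(q/r)$ is odd and $\ge 3$, while when $r$ is even, $n$ is visibly even and $n\ge 4$ because $r/2\mid q$ gives $q\ge r/2$. In both cases $|G| = 2nk = 2(r+2q)$, so $[G:H] = r+2q$, and $[Z_G(H):H] = r$ by design.

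Finally, since $|H|=2$, a coset $gH$ is fixed by the nontrivial $h\in H$ precisely when $g^{-1}hg = h$, i.e.\ when $g\in Z_G(H)$; so the number of fixed cosets is $|Z_G(H)|/|H| = r$, and the remaining $2q$ cosets split into $q$ orbits of size $2$, as required. The only mildly delicate point in the argument is the parity split in the choice of $(n,k)$, which is forced by the dichotomy $|Z_{D_n}(s)|\in\{2,4\}$; the divisibility $r\mid 2q$ is exactly what makes the two parity cases exhaustive.
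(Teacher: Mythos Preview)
Your proof is correct and takes essentially the same route as the paper: necessity is Lagrange, and for sufficiency both you and the paper produce a witness of the form $(\text{cyclic})\times(\text{dihedral})$ with $H$ the reflection subgroup, the paper writing the dihedral factor as $\Z_n\rtimes\Z_2$ with the inversion action. The only cosmetic difference is the case split---the paper divides on the parity of $m=2q/r$ rather than the parity of $r$---but the resulting witness groups coincide.
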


\begin{proof}
First suppose that $H$ is a proper nonnormal subgroup of a finite group $G$ with $H\simeq \Z_2$.
As above, put $r=[Z_G(H):H]$,
and let $[G:H]=r+2q$, so that $q$ is a positive integer.
We have
$|G|=2r+4q$.
Also,
\[
|Z_G(H)|=2r,
\]
which must divide $|G|$,
i.e.,
$2r\mid (2r+4q)$.
Thus
$2r\mid 4q$,
so
$r\mid 2q$.

Conversely, let $r$ and $q$ be positive integers with $r\mid 2q$, say $2q=mr$.
We must show that there exists a finite group $G$
containing a subgroup $H\simeq \Z_2$
such that
\[
[Z_G(H):H]=r\midtext{and}[G:H]=r+2q.
\]

Case 1.
$m$ is even.
Put
\[
G=\Z_r\times (\Z_{m+1}\rtimes \Z_2),
\]
where $H=\Z_2$ acts on $\Z_{m+1}$ by the automorphism $n\mapsto -n$. Since $m+1$ is odd, this automorphism has no fixed points other than the identity element 0,
so
\[
Z_G(H)=\Z_r\times \Z_2,
\]
and hence $[Z_G(H):H]=r$.
Further,
\[
[G:H]=r(m+1)=r+rm=r+2q.
\]

Case 2.
$m$ is odd.
Then $r$ is even, say $r=2j$.
Put
\[
G=\Z_j\times (\Z_{2(m+1)}\rtimes \Z_2),
\]
where again $H=\Z_2$ acts on $\Z_{2(m+1)}$ by $n\mapsto -n$.
In this case, the fixed-point subgroup under this action is $\{0,m+1\}$,
so
\[
Z_G(H)=\Z_j\times \{0,m+1\}\times \Z_2,
\]
and hence $[Z_G(H):H]=j\cdot 2=r$.
Further,
\[
[G:H]=j\cdot 2(m+1)=r(m+1)=r+2q,
\]
as desired.
\end{proof}

Now we continue the investigation of the directed graph $E$, begun in the first paragraph of this section.
Note that
for each $s\in Z_G(H)$ we have $H_s=H$ and $V^s=V$,
so the fixed points in $G/H$ contribute a summand
$rV$
in $X\dashind V$.

Each 2-element orbit in $G/H$ is a disjoint union
$sH\sqcup hsH$,
where $s\notin Z_G(H)$ and $h$ is the generator of $H$.
We have
$H_s=\{1\}$ and consequently $V^s$ is
(equivalent to)
the trivial character 1,
and so
\[
\ind_{H_s}^H V^s=\ind_{\{1\}}^H 1,
\]
which is
the regular representation $\lambda_H$ of $H$.
As $H$ is a finite abelian group, we have
\[
\lambda_H\simeq \bigoplus_{U\in \what H}U.
\]
Combining, we see that for each $V\in \what H$,
\[
X\dashind V\simeq rV\oplus q\bigoplus_{U\in \what H}U
=(r+q)V\oplus qU,
\]
where $U$ is the character of $H$ different from $V$.
Consequently, the associated graph $E$ has the form
\[
\xymatrix{
U \ar@(ul,dl)_{(r+q)} \ar@/_/[r]_{(q)}
&V \ar@(ur,dr)^{(r+q)} \ar@/_/[l]_{(q)}
}
\]
where $U,V$ are the two characters of $H$,
and where
a number in parentheses indicates the number of edges from the first vertex to the second.
Because we are assuming that $H$ is a proper nonnormal subgroup, we have $q>0$.
Thus the graph $E$ is finite and
transitive (meaning that $vE^1w\ne \varnothing$ for all $v,w\in E^0$),
and every cycle has an entry,
so by
\cite[Corollary~3.11]{kpr}
$C^*(E)$ is unital, simple, and purely infinite.
By \cite[Remark~4.3]{raeburngraph}, $C^*(E)$ is nuclear and in the bootstrap class.
Thus $\OO_X$, being
isomorphic to $C^*(E)$,
is classifiable up to Morita equivalence by its $K$-theory,
according to the classification theorem of Kirchberg and Phillips \cite{KirPhi, phillipsclassification}.
In fact, since $\OO_X$ is unital, it is classifiable up to isomorphism by $K_0$, $K_1$, and the class $[1]_0$ in $K_0$ of the identity $1_{\OO_X}$.


To compute the $K$-theory, by \cite[Theorem~7.16]{raeburngraph}
we can use the vertex matrix $A$, indexed by $E^0$, where the $ij$-entry is the number of edges from the $j$th vertex to the $i$th one.
And then the algorithm tells us
that, identifying the matrix $B:=A^t-1$ with an endomorphism of the free abelian group $\Z^{E^0}$, we have
\begin{align*}
K_1(C^*(E))&\simeq \ker B
\\
K_0(C^*(E))&\simeq \coker B=\Z^{E^0}/B\Z^{E^0},
\end{align*}
where the isomorphism for $K_0$ is given by sending the class of the vertex projection $[p_v]_0$ to $1_v+B\Z^{E^0}$.
(The usual formulation involves $1-A^t$, but in our case the matrix $A^t-1$ is more convenient, and the results are the same.)
In our situation we have $E^0=\{U,V\}$, and
\[
A=\mtx{r+q&q\\q&r+q},
\]
and so
\[
B=A^t-1=\mtx{r+q-1&q\\q&r+q-1}.
\]
Let $p=r+q-1$. Then $p\ge q>0$,
and we have
\begin{align*}
K_1&=\ker B
\\
K_0&=\Z^2/B\Z^2.
\end{align*}

Since
the graph algebra $C^*(E)$ is unital,
we must compute the class $[1]_0$ in $K_0$ of the identity $1_{C^*(E)}$.
For this, we need to compute the classes
$[p_v]_0$
of the vertex projections and then add them up.
In our case, we have
\[
[1]_0
=\mtx{1\\1}+B\Z^2.
\]

To compute the cokernel of $B$,
we appeal to the standard theory which identifies it with a direct sum of abelian groups
via computing  the Smith normal form of $B$. We recall from eg. \cite[Section~3.22]{SurveyMatrix} how this works. Let $B\in M_n(\Z)$, and suppose $B$ has rank $k$ for some $k\leq n$. For each $j=1,\dots ,k$, if $B$ has at least one nonzero $j$-square subdeterminant, define $f_j$ as the greatest common divisor of all $j$-th order subdeterminants of $B$. Set $f_0=1$. Note that $f_{j-1}$ divides $f_j$ for all $j=1,\dots, k$. The Smith normal form of $B$ is the diagonal matrix $N$ with diagonal entries
\[
q_j:=f_j/f_{j-1}\righttext{for}j=1,\dots, k.
\]
Note that $q_j$ divides $q_{j+1}$ for all $j=1,\dots, k$. Moreover, there are invertible matrices $C,D\in M_n(\Z)$ such that $B=CND$ and the map $x\to C^{-1}x$ on $\Z^n$ induces an isomorphism
\begin{equation*}
\Phi\:\Z^n/B\Z^n\to \Z^n/N\Z^n.
\end{equation*}
To compute the class of the identity in $K_0$ we will compute its image in $\Z^n/N\Z^n$ under $\Phi$. To get explicit formulas, we split up the analysis into several cases.

Case 1.
$r=1$.
Then $B=\smtx{q&q\\q&q}$.
Thus
$K_1=\ker B$ is the cyclic subgroup of $\Z^2$ generated by $\smtx{1\\-1}$,
so $K_1\simeq \Z$.

Clearly, $B$ has rank $1$. In the above notation we have $B=CND$ for
\[
C=\mtx{1&0\\1&1},\quad N=\mtx{q&0\\0&0},\quad D=\mtx{1&1\\0&1}.
\]
Denote by $(m,n)$ the transpose of a column-vector $\smtx{m\\n}$ in $\Z^2$. The map $(m,n)\mapsto (m(\operatorname{mod}\, q),n)$
 has kernel $N\Z^2$, and so induces an  isomorphism
$$
\Psi:\Z^2/N\Z^2\rightarrow \Z_q\oplus \Z.
$$

Composing $\Psi$ with $\Phi$ gives an isomorphism
\[
K_0\simeq \Z_q\oplus \Z,
\]
and since $C^{-1}$ carries $\smtx{1\\1}$ to $\smtx{1\\0}$ we have, in $\Z_q\oplus \Z$,
\[
[1]_0=(1,0).
\]

By \cite[Theorem~4.8 (3)]{spielbergBaumslag},
the
$C^*$-algebra of the category of paths
given by the positive submonoid $\Lambda$
of the
Baumslag-Solitar group
\[
BS(1,q+1)=\<a,b\mid ab=b^{q+1}a\>
\]
is UCT Kirchberg (by \cite[Corollary~4.10]{spielbergBaumslag}) and has $K$-theory
$(\Z_q\oplus \Z,\Z)$,
with $[1]_0=(1,0)$,
and hence when $r=1$ we have $\OO_X\simeq C^*(\Lambda)$.

\begin{ex}
Here is one of the simplest examples of the above:
let $H=\Z_2$ as a subgroup of the group $G=S_3$ of permutations of a 3-element set,
and let $X$ be the associated $C^*(H)$-correspondence.
It follows from the above analysis that
$\OO_X$ is isomorphic to the algebra of the following graph:
\bigskip
\[
\xymatrix{
\bullet \ar@/^/[r] \ar@(ur,ul)[] \ar@(dr,dl)[]
&
\bullet \ar@/^/[l] \ar@(ul,ur)[] \ar@(dl,dr)[]
}
\]
\bigskip

With the above notation, we have $r=q=1$,
so the $K$-groups of $\OO_X$ are
both $\Z$.
The crossed product of $PSL(2,\Z)$ acting on the boundary of the upper half plane,
and the Ruelle algebra associated to the 2-adic solenoid
are purely infinite simple $C^*$-algebras
with this $K$-theory
\cite[Application~15]{LSboundary},
so $\OO_X$ is Morita equivalent to both of these.
\end{ex}

Case 2. $r>1$. We have
\[
B=\mtx{p&q\\q&p},
\]
where $p>q>0$. Then
\[
K_1=\ker B=0.
\]

We turn to computing $K_0$. Since $B$ has rank two, we find that $f_0=1$, $f_1=\operatorname{gcd}(p,q)$ and $f_2=\operatorname{det} B=p^2-q^2$. Denote $d=p^2-q^2$.

We first suppose that $p$ and $q$ are coprime, so that $f_1=1$. The Euclidean algorithm gives
$s,t\in\Z$
such that
\[
sp+tq=1.
\]
The Smith normal form of $B$ and the associated invertible matrices $C,D$ are given as follows
\[
C=\mtx{p&-t\\q&s},\quad N=\mtx{1&0\\0&d},\quad D=\mtx{1&tp+sq\\0&1}.
\]
The map $(m,n)\mapsto (0, n(\operatorname{mod}\, d))$ induces an isomorphism
\[
\Psi\:\Z^2/N\Z^2\to \Z_1\oplus \Z_d,
\]
and the composition $\Psi\circ \Phi$ gives an isomorphism
\[
K_0\simeq \Z_1\oplus \Z_d=\Z_d.
\]
Since the isomorphism $\Phi$ carries the class of the identity in $K_0$ into
\[
C^{-1}\mtx{1\\1}=\mtx{s&t\\-q&p}\mtx{1\\1}=\mtx{s+t\\p-q},
\]
it follows that the image of $[1]_0$ in $\Z_d$ is identified as
\[
[1]_0=p-q.
\]
Now,
\[
d=p^2-q^2=(p-q)(p+q),
\]
so $[1]_0$ divides the order of the cyclic group $K_0$.
If $p-q=1$,
then it is a generator of $K_0$,
and so
\[
\OO_X\simeq \OO_{d+1}.
\]
On the other hand, if $p-q>1$, then
\[
\OO_X\simeq M_{p-q}(\C)\otimes \OO_{d+1}.
\]

Now suppose that $p$ and $q$ are not coprime. Rename $a=f_1=\gcd(p,q)$ and note that the
Smith normal form of $B$ is the matrix
\[
N=\mtx{a&0\\0&d/a}.
\]

Write $p=au$ and $q=av$. Then $u$ and $v$ are coprime, and $d=a^2g$, where $g=u^2-v^2$
is the determinant of the matrix
\[
B_1=\mtx{u&v\\v&u}.
\]
We then have $B=aB_1$, where the analysis of the coprime case applies to $B_1$. Choosing
$z,w\in \Z$ such that $zu+wv=1$, the matrix that plays the role of $C$ is now
\[
C_1=\mtx{u&-w\\v&z}.
\]
By the coprime case, we get an isomorphism
\[
K_0\simeq \Z_a\oplus \Z_{ag}.
\]
For the class of the identity in $K_0$,
in $\Z_a\oplus \Z_{ag}$ we have
\[
[1]_0=(z+w,u-v),
\]
where we can find suitable $z,w$
using either $zu+wv=1$ or $zp+wq=a$.

\begin{ex}\label{r 2}
If $r=2$, then $p=q+1$ is coprime to $q$.
We have
$p-q=1$,
$d=p+q=2q+1$,
$K_0\simeq \Z_{2q+1}$,
$[1]_0=1$,
and
\[
\OO_X\simeq \OO_{2q+2}.
\]
\end{ex}

\begin{ex}
If $q=1$, then $r=2$ (since we must have $r\mid 2q$ and we are assuming that $r>1$),
so this is a special case of \exref{r 2}:
we have $p=2$,
giving
\[
B=\mtx{2&1\\1&2},
\]
$d=3$,
$K_0\simeq \Z_3$,
$[1]_0=1$,
and
\[
\OO_X\simeq \OO_4.
\]
\end{ex}

\begin{ex}
If $r=q=2$,
then again we are in a special case of \exref{r 2},
and this time
$p=3$,
giving
\[
B=\mtx{3&2\\2&3},
\]
$d=5$,
$K_0\simeq \Z_5$,
$[1]_0=1$,
and
\[
\OO_X\simeq \OO_6.
\]
\end{ex}

\begin{ex}\label{r q}
If $r=q>2$, then $p=2q-1$,
which is coprime to $q$ since
\[
-p+2q=1.
\]
Thus
$p-q=q-1$,
$d=(q-1)(3q-1)$,
$K_0\simeq \Z_{(q-1)(3q-1)}$,
$[1]_0=q-1$,
and
\[
\OO_X\simeq M_{q-1}(\C)\otimes \OO_{(q-1)(3q-1)+1}.
\]
\end{ex}

\begin{ex}
As a special case of \exref{r q},
if $r=q=3$,
then
$p=5$,
$p-q=2$,
$d=16$,
$K_0\simeq \Z_{16}$,
$[1]_0=2$,
and
\[
\OO_X\simeq M_2(\C)\otimes \OO_{17}.
\]
\end{ex}

\begin{ex}
If $r=3$ and $q=6$,
then $p=8$ is not coprime to $q$.
We have
$a=\gcd(p,q)=2$,
$p-q=2$,
$u=\frac pa=4$, $v=\frac qa=3$,
$g=u^2-v^2=7$,
and
$K_0\simeq \Z_2\oplus \Z_{14}$.
Since
\[
8-6=2,
\]
we can take $z=1$ and $w=-1$, so
\[
[1]_0=(z+w,u-v)=(0,1).
\]
Note that if $r=3$ then,
since $r\mid 2q$ by \propref{which pairs}, we must have $3\mid q$, so in some sense this is the next biggest example after the preceding one, and the smallest one with $p$ and $q$ not coprime.
\end{ex}

\begin{ex}
If $r=3$ and $q=9$,
then $p=11$ is coprime to $q$.
We have
$p-q=2$,
$d=40$,
$K_0\simeq \Z_{40}$,
$[1]_0=2$,
and
\[
\OO_X\simeq M_2(\C)\otimes \OO_{41}.
\]
\end{ex}

\begin{ex}
If $r=6$ and $q=9$,
then $p=14$ is coprime to $q$,
and we have
$p-q=5$,
$d=115$,
$K_0\simeq \Z_{115}$,
$[1]_0=5$,
and
\[
\OO_X\simeq M_5(\C)\otimes \OO_{116}.
\]
\end{ex}

\section{Connection with \cite{ManRaeSut}}\label{MRS}

If the subgroup $H$ is a compact Lie group, then we can choose a faithful finite-dimensional unitary representation $\rho$. In this situation, \cite{ManRaeSut, ManRaeSut2} study the Doplicher-Roberts algebra $\OO_\rho$,
and show that it is Morita equivalent to a Cuntz-Krieger algebra ---
equivalently, a graph algebra, although
at the time \cite{ManRaeSut, ManRaeSut2} were written, the technology of graph $C^*$-algebras had not yet appeared.

The finite-dimensional Hilbert space $\HH$ of the representation $\rho$
can be regarded as an $A-\C$ correspondence, where $A=C^*(H)$ as before,
but there does not appear to be a natural way to give $\HH$ the structure of an $A$-correspondence.
Nevertheless, something interesting happens:
the method that \cite[Section~1]{ManRaeSut} use to construct a graph $E$ from $\rho$
is strikingly similar to our construction in \lemref{multiplicity}.
In \cite{ManRaeSut} the construction is as follows:
let $R$ be the set of equivalence classes of irreducible representations of $H$ occurring in the various tensor powers $\rho^{\otimes n}$; if $H$ is finite then $R=\what H$.
The graph $E$ has vertex set $R$,
and for each $\pi_1,\pi_2\in R$ the number of edges in $E$ from $\pi_2$ to $\pi_1$
is the multiplicity of $\pi_2$ in $\pi_1\otimes\rho$,
whereas in our \lemref{multiplicity} we define $E^0=\what H$,
and the number of edges from $\pi_2$ to $\pi_1$
is the multiplicity of $\pi_1$ in $X\dashind \pi_2$.
The similarity is uncanny,
particularly because the Hilbert space of $X\dashind \pi_2$ is
$X\otimes_A \HH_{\pi_2}$.

Moreover, although in \cite{ManRaeSut} the construction of the Doplicher-Roberts algebra $\OO_\rho$ does not explicitly involve an $A$-correspondence,
in the cases where $R=\what H$
the graph $E$ with $E^0=R$ gives a correspondence over $c_0(E^0)$,
and which is Morita equivalent to $A$,
and hence the method outlined in \remref{all correspondences} gives an $A$-correspondence $X$ with $\OO_X$ Morita equivalent to $C^*(E)$, and therefore to $\OO_\rho$.
That being said, at present this observation remains little more than a curiosity.


\begin{thebibliography}{LRRW14}

\bibitem[AM]{AlbMey}
S.~Albandik and R.~Meyer, \emph{{Product systems over Ore monoids}}, Doc. Math. \textbf{20} (2015), 1331--1402.

\bibitem[BKQ]{bkqexelpardo}
E.~B\'edos, S.~Kaliszewski, and J.~Quigg, \emph{{On Exel-Pardo algebras}}, J. Operator Theory, to appear, arXiv:1512.07302 [math.OA].

\bibitem[Dix77]{dixmier}
J.~Dixmier, \emph{{$C^*$-algebras}}, North-Holland, 1977.

\bibitem[EP]{exelpardo}
R.~Exel and E.~Pardo, \emph{{Self-similar graphs, a unified treatment of
  Katsura and Nekrashevych C*-algebras}},
Adv. Math. \textbf{306} (2017), 1046--1129.

\bibitem[Fel64]{fel:weak2}
J.~M.~G. Fell, \emph{{Weak containment and induced representations of groups.
  II}}, Trans. Amer. Math. Soc. \textbf{110} (1964), 424--447.

\bibitem[FMR03]{FowMuhRae}
N.~J. Fowler, P.~S. Muhly, and I.~Raeburn, \emph{Representations of
  {C}untz-{P}imsner algebras}, Indiana Univ. Math. J. \textbf{52} (2003),
  no.~3, 569--605.

\bibitem[FR99]{FowRae}
N.~J. Fowler and I.~Raeburn, \emph{The {T}oeplitz algebra of a {H}ilbert
  bimodule}, Indiana Univ. Math. J. \textbf{48} (1999), no.~1, 155--181.

\bibitem[Gre80]{gre:structure}
P.~Green, \emph{The structure of imprimitivity algebras}, J. Funct. Anal.
  \textbf{36} (1980), no.~1, 88--104.

\bibitem[KPW98]{KajPinWatIdeal}
T.~Kajiwara, C.~Pinzari, and Y.~Watatani, \emph{Ideal structure and simplicity
  of the {$C^\ast$}-algebras generated by {H}ilbert bimodules}, J. Funct. Anal.
  \textbf{159} (1998), no.~2, 295--322.

\bibitem[KPQ12]{graphcorres}
S.~Kaliszewski, N.~Patani, and J.~Quigg, \emph{Characterizing graph
  {$C^*$}-correspondences}, Houston J. Math. \textbf{38} (2012), 751--759.

\bibitem[Kat04]{katsuracorrespondence}
T.~Katsura, \emph{On {$C^*$}-algebras associated with {$C^*$}-correspondences},
  J. Funct. Anal. \textbf{217} (2004), no.~2, 366--401.

\bibitem[Kat08]{KatKirchberg}
\bysame, \emph{A construction of actions on {K}irchberg algebras which induce
  given actions on their {$K$}-groups}, J. Reine Angew. Math. \textbf{617}
  (2008), 27--65.

\bibitem[KP00]{KirPhi}
E.~Kirchberg and N.~C. Phillips, \emph{Embedding of exact {$C^*$}-algebras in
  the {C}untz algebra {$\scr O_2$}}, J. Reine Angew. Math. \textbf{525} (2000),
  17--53.

\bibitem[KPR98]{kpr}
A.~Kumjian, D.~Pask, and I.~Raeburn, \emph{{Cuntz-Krieger algebras of directed
  graphs}}, Pacific J. Math. \textbf{184} (1998), 161--174.

\bibitem[LRRW14]{lrrw}
M.~Laca, I.~Raeburn, J.~Ramagge, and M.~F. Whittaker, \emph{Equilibrium states
  on the {C}untz-{P}imsner algebras of self-similar actions}, J. Funct. Anal.
  \textbf{266} (2014), no.~11, 6619--6661.

\bibitem[LS96]{LSboundary}
M.~Laca and J.~Spielberg, \emph{Purely infinite {$C^*$}-algebras from boundary
  actions of discrete groups}, J. Reine Angew. Math. \textbf{480} (1996),
  125--139.

\bibitem[Mac52]{mackeyinduced}
G.~W. Mackey, \emph{Induced representations of locally compact groups. {I}},
  Ann. of Math. (2) \textbf{55} (1952), 101--139.

\bibitem[MRS92a]{ManRaeSut2}
M.~H. Mann, I.~Raeburn, and C.~E. Sutherland, \emph{Representations of compact
  groups, {C}untz-{K}rieger algebras, and groupoid {$C^*$}-algebras},
  Miniconference on probability and analysis ({S}ydney, 1991), Proc. Centre
  Math. Appl. Austral. Nat. Univ., vol.~29, Austral. Nat. Univ., Canberra,
  1992, pp.~135--144.

\bibitem[MRS92b]{ManRaeSut}
\bysame, \emph{Representations of finite groups and {C}untz-{K}rieger
  algebras}, Bull. Austral. Math. Soc. \textbf{46} (1992), no.~2, 225--243.

\bibitem[MM64]{SurveyMatrix}
M.~Marcus and H.~Minc, \emph{A survey of matrix theory and matrix
  inequalities}, Allyn and Bacon Inc., Boston, 1964, Allyn and Bacon series in
  advanced mathematics.

\bibitem[Mor]{adamtensor}
A.~Morgan, \emph{Cuntz-Pimsner algebras associated to tensor products of
  $C^*$-correspondences},
  J. Aust. Math. Soc. \textbf{102} (2017), no.~3, 348--368.  

\bibitem[MS00]{muhsolmorita}
P.~S. Muhly and B.~Solel, \emph{On the {M}orita equivalence of tensor
  algebras}, Proc. London Math. Soc. (3) \textbf{81} (2000), no.~1, 113--168.

\bibitem[Nek09]{nek}
V.~Nekrashevych, \emph{{C$^*$-algebras and self-similar groups}}, J. Reine
  Angew. Math. \textbf{630} (2009), 59--123.

\bibitem[Phi00]{phillipsclassification}
N.~C. Phillips, \emph{A classification theorem for nuclear purely infinite
  simple {$C^*$}-algebras}, Doc. Math. \textbf{5} (2000), 49--114 (electronic).

\bibitem[Rae05]{raeburngraph}
I.~Raeburn, \emph{Graph algebras}, CBMS Regional Conference Series in
  Mathematics, vol. 103, Published for the Conference Board of the Mathematical
  Sciences, Washington, DC, 2005.

\bibitem[RW98]{tfb}
I.~Raeburn and D.~P. Williams, \emph{{Morita equivalence and continuous-trace
  $C^*$-algebras}}, Math. Surveys and Monographs, vol.~60, American
  Mathematical Society, Providence, RI, 1998.

\bibitem[Rie74]{rie:induced}
M.~A. Rieffel, \emph{{Induced representations of $C^*$-algebras}}, Adv. Math.
  \textbf{13} (1974), 176--257.

\bibitem[Spi12]{spielbergBaumslag}
J.~Spielberg, \emph{{$C^\ast$}-algebras for categories of paths associated to
  the {B}aumslag-{S}olitar groups}, J. Lond. Math. Soc. (2) \textbf{86} (2012),
  no.~3, 728--754.
  
\bibitem[Wil07]{danacrossed}
D.~P. Williams, \emph{Crossed products of $C^*$-algebras},
Math. Surveys and Monographs, vol.~134,
American
Mathematical Society, Providence, RI, 2007.

\bibitem[Zim84]{Zimmer}
R.~J. Zimmer, \emph{Ergodic theory and semisimple groups}, Monographs in
  Mathematics, vol.~81, Birkh\"auser Verlag, Basel, 1984.

\end{thebibliography}

\providecommand{\bysame}{\leavevmode\hbox to3em{\hrulefill}\thinspace}
\providecommand{\MR}{\relax\ifhmode\unskip\space\fi MR }
\providecommand{\MRhref}[2]{%
  \href{http://www.ams.org/mathscinet-getitem?mr=#1}{#2}
}
\providecommand{\href}[2]{#2}

\end{document}